\newtheorem{theorem}{Theorem}[section]
\newtheorem{corollary}{Corollary}[section]
\newtheorem{proposition}{Proposition}[section]
\newtheorem{definition}{Definition}[section]
\newtheorem{remark}{Remark}[section]
\newtheorem{example}{Example}
\newcommand{\ba}{\begin{eqnarray}}
\newcommand{\ea}{\end{eqnarray}}
\def\XX{\boldsymbol{X}}
\def\xx{\boldsymbol{x}}
\def\KK{\boldsymbol{K}}
\def\AA{\boldsymbol{A}}
\def \UU{\boldsymbol{U}}
\def\VV{\boldsymbol{V}}
\def\SS{\boldsymbol{S}}
\def\ss{\boldsymbol{s}}
\def\WW{\boldsymbol{W}}
\def\w{\boldsymbol{w}}
\def\uu{\boldsymbol{u}}
\def\tt{\boldsymbol{t}}
\def\aa{\boldsymbol{a}}
\def\YY{\boldsymbol{Y}}
\def\yy{\boldsymbol{y}}
\def\xxi{\boldsymbol{\xi}}
\def\BB{\boldsymbol{B}}
\def\Si{\boldsymbol{\Sigma}}
\def\GGG{\mathcal{G}}
\def\ets{\mathcal{E}T\alpha S}
\def\qq{\boldsymbol{q}}
\def\ww{\boldsymbol{w}}
\def\NN{\mathbb{N}}
\def\mmu{\boldsymbol{\mu}}
\def\RR{\mathbb{R}}
\def\rr{\boldsymbol{r}}
\def\hh{\boldsymbol{h}}
\def\ZZ{\boldsymbol{Z}}
\def\zz{\boldsymbol{z}}
\def\La{\boldsymbol{\Lambda}}
\def\uu{\boldsymbol{u}}
\def\ee{\boldsymbol{e}}
\def\diag{\text{diag}}
\def\FFF{\mathcal{F}}
\def\banach{\mathfrak{B}_b(\RR^d)}
\newcommand{\normsup}[1]{\| #1 \|_{\infty}}
\begin{document}

 \title{ 
Additive subordination of multiparameter Markov processes
}

    \author[G. D'Onofrio]{Giuseppe D'Onofrio$^{\circ}$}
        \address{$^{\circ}$ Dipartimento di Scienze Matematiche, Politecnico di Torino, Corso Duca degli Abruzzi 24, 10129 Torino, Italy}
        \email{giuseppe.donofrio@polito.it}

    \author[A.Mutti]{Alessandro Mutti$^{\circ}$}
              \email{alessandro.mutti@polito.it}

    \author[\v P.Semeraro]{ Patrizia Semeraro$^{\circ}$}
\email{patrizia.semeraro@polito.it}

\begin{abstract}
    In this work, we consider, in a general setting, multiparameter multidimensional Markov processes that are time-changed by an independent additive subordinator. 
    By extending Phillips theorem, we show that the resulting process is a Feller evolution and we characterize its generator.
    We further derive its pseudo-differential representation and show that its symbol admits a L\'evy-Khintchine representation.
    In the specific case of multiparameter Ornstein-Uhlenbeck processes, we obtain explicit expression of the symbol,  along with the associated characteristic L\'evy triplet. 
    As an application, we consider a factor-based specification for the Ornstein-Uhlenbeck process subordinated by a Sato process. 
    The constructive nature of this process is inspired by applications in finance.
\end{abstract}

\medskip
    
\noindent\keywords{Multiparameter Ornstein-Uhlenbeck processes, Sato processes, multivariate inhomogeneous subordination, multivariate asset modeling, Levy-type processes}
\subjclass{60G53, 60J35, 47D07}

\maketitle

\section{Introduction} 
\label{sec:level1}

Subordinated L\'evy models are widely used in finance to model asset returns, motivated by both theoretical and empirical considerations.
From the theoretical perspective, they are semimartingales,  are parsimonious in terms of parameters, and  can be specified to admit analytical characteristic functions.
They are also economically meaningful, as the subordinator can be interpreted as a stochastic economic clock, measured through the volume of trades (\cite{ane2000order}).
From the empirical point of view, they are able to capture key stylized features of asset returns, such as skewness and  kurtosis (\cite{cont2001empirical}).

In the multivariate setting, assuming a single economic time for all assets is unrealistic, as cross-sectional trading characteristics differ significantly across assets (\cite{Ha}). 
This motivates the need to assign each asset its own economic clock, leading to the construction of Lévy processes via multiparameter subordination à la  \cite{ba};  see also \cite{Sem1}, \cite{LuciSem1}, \cite{guillaume2018multivariate}, \cite{buchmann2019weak} and \cite{meoli2025bivariate}, just to name a few.
However, when using L\'evy models, the log-return process exhibits independent and stationary increments, which limits their ability  to reproduce the variations of the implied volatility
smile and the implied correlation across maturities. A parsimonious generalization that overcomes these limitations is given by additive subordination. 
Additive subordinators introduce time inhomogeneity while preserving analytical tractability.
Additive subordination was introduced by \cite{li2016additive} for general Markov processes. 
Their results allow additive subordination  of It\^o processes, which are widely used in financial modeling. 
For instance, \cite{li2013ornstein} and \cite{li2014time} consider the Ornstein-Uhlenbeck process,  which is well suited to model  the mean-reverting behavior typical of commodity markets.
However, an important limitation of their construction is that the additive subordinator is one-dimensional. 
To the best of our knowledge, multiparameter additive subordination, allowing each asset to evolve according to its own clock, has so far been explored only in the context of Lévy processes.

This paper generalizes the results in \cite{li2016additive} and \cite{mendoza2016multivariate} to additive subordination of multiparameter Markov processes, the latter defined in \cite{khoshnevisan2006multiparameter}. 
We generalize Phillips theorem and derive the symbol of the subordinated process, which serves as the analogue of the characteristic exponent in the Lévy process framework and, under suitable regularity conditions, admits a Lévy-Khintchine representation.
We then focus on multiparameter additive subordination of the Ornstein-Uhlenbeck process, providing a multidimensional extension of the results in \cite{li2013ornstein}.

We further specify the model with an application to the energy market in mind. 
To maintain parsimony in the number of parameters, we consider Sato subordinators. 
Sato processes are additive processes used in financial modeling due to their ability to capture the term structure of moments observed in financial markets (see e.g \cite{carr2007self}, \cite{guillaume2018multivariate}, \cite{carr2021additive}, and \cite{azzone2025explicit}).
The case of Sato subordination applied to multiparameter Brownian motions was introduced in \cite{semeraro2020note}.
The ability of Sato subordination to fit the term structures of both correlation and volatility in the stock market is provided by \cite{amici2025multivariate} and supports our choice.

Beyond its financial applications, multiparameter additive subordination provides a simple and powerful method for constructing time-inhomogeneous Markov semimartingales with jumps.

The definition of multiparameter processes is not unique. In addition to the approach of \cite{ba}, many authors employ the concept of L\'evy sheets, see for instance \cite{barndorff2012meta}, \cite{pedersen2004relations}; and refer to \cite{khoshnevisan2006multiparameter} for a comparison of the two approaches.
Although the latter has an elegant mathematical formulation, it is not well suited for our purposes.
Multiparameter subordination was introduced by \cite{ba} for L\'evy processes. 
In this framework, multiparameter L\'evy processes $L(\ss)$ are parametrized by a vector parameter $\ss\in \RR^k_+$, and defined such that the one-parameter processes $L(s\ee_j)$ can represent different time scales, where $\{\ee_1,\dots,\ee_k\}$ is the canonical basis of $\RR^k$. 
This allows for the construction of a system in which each marginal process evolves according to its own time scale with the parameter $\ss$ functioning as a vector-valued time. 
The price to pay for this property is that the system of distributions $(\XX(\ss_1),\ldots, \XX(\ss_n))$ with $\ss_1 \preceq \ss_2 \preceq \ldots \preceq \ss_n$, $n \in \{1,2,\dots\}$, does not identify the whole system of finite dimensional distributions (see Remark 4.6 in \cite{ba}).
However, the system of distributions $(\XX(\ss_1),\ldots, \XX(\ss_n))$  along the directions given by the partial order on $\RR^k$ is sufficient to identify the finite dimensional distributions of the one-parameter process obtained by subordination. 
On the other side, L\'evy sheets are also processes parametrized by a vector parameter $\ss\in \RR^k_+$, but they are not well suited to represent one-dimensional processes with different time scales, since if $L(\ss), \ss\in \RR^k_+$ is a L\'evy sheet then, for all $j=1,\ldots, k$ we have $L(s\ee_j)=0$. Furthermore,  subordination of L\'evy sheets on $\RR^k_+$ (\cite{barndorff2012meta}) is defined with the aim of an inner operation: the subordinated process remains a Lévy sheet and thus retains its vector-parameter structure.
These considerations motivate our choice to propose and study additive subordination of multiparameter Markov processes in the spirit of \cite{ba}. 

We emphasize that the results on subordinated multiparameter Markov processes are of a general nature, with countless potential applications that extend well beyond the field of mathematical finance.

The paper is organized as follows. 
Section \ref{Markov} recalls Markov processes and Feller evolutions together with generators and symbols. Moreover, the concept of multiparameter Markov processes and their properties is introduced.
Then, in Section \ref{sec:MultiparameterAssitiveSUB}, we introduce multiparameter additive subordination, we prove the generalization of Phillips Theorem, and study the symbol of the resulting Feller evolution. 
The multiparameter Ornstein-Uhlenbeck additive subordination is considered in Section \ref{sec:M-OUprocess}, where the Lévy-Khintchine representation of the symbol of this process is explicitly derived and a first constructive example is provided. 
This example relies on the Sato subordination. 
Finally we discuss possible future directions of our study in Section \ref{sec:FutureWorks}.

\section{Markov processes, characteristic function, and symbol}\label{Markov}

Let $(\Omega,\mathcal{A},\mathbb{P})$ be a probability space and let $(\FFF_t,t \geq 0)$ be a filtration.
A $\FFF_t$-adapted $\RR^d$-valued stochastic process $(\XX(t))_{t \geq 0}$ is a Markov process if 
\begin{equation*}
    \mathbb{E}(f(\XX(t)) \vert \FFF_s) = \mathbb{E}(f(\XX(t)) \vert \XX(s)),
\end{equation*}
for every $0 \leq s \leq t < \infty$ and $f \in \banach$, the space of all bounded Borel measurable functions from $\RR^d$ to $\RR$.

With each Markov process $\XX(t)$ we associate a family of two parameter operators $(T_{t_1,t_2})_{ t_1\leq t_2}$, also called propagators, on the Banach space $\banach$ equipped with the supremum norm by
\begin{equation}\label{eq:oper}
    T_{t_1,t_2}f(\xx) = \mathbb{E}[f(X(t_2)) | X(t_1) = \xx],
\end{equation}
for every $f \in \banach$, $\xx \in \RR^d$.

The Markov process is homogeneous if we have $T_{s,s+t} = T_{0,t}$ for every $s,t \geq 0$. In this case, we write $T_t = T_{0,t}$, for every $t \geq 0$, and the family of two parameter operators reduces to a convolution semigroup $(T_t)_{t\geq0}$.
We assume that the Markov process is normal, i.e. $T_{t_1,t_2}(\banach)\subseteq\banach.$ This implies, by Theorem 3.1.2 \cite{applebaum2009levy}, 
 that the family of operators in \eqref{eq:oper} is a Markov evolution system, defined by the following.

\begin{definition} \label{def:MarkovEvol}
A family of linear operators  $(T_{t_1,t_2})_{ t_1\leq t_2}$ is a Markov evolution if it satisfies the following conditions:
\begin{enumerate}
    \item $T_{t_1,t_2}$ is a linear operator on $\banach$ for every $0 \leq t_1 \leq t_2 < \infty$;
    \item $T_{t,t} = \text{Id}$ for every $t \geq 0$;
    \item $T_{t_1,t_2}T_{t_2,t_3} = T_{t_1,t_3}$ for each $0 \leq t_1 \leq t_2 \leq t_3 < \infty$;
    \item $f \geq 0 \implies T_{t_1,t_2}f \geq 0$ for every $0 \leq t_1 \leq t_2 < \infty$, $ f \in \banach$;
    \item $T_{t_1,t_2}$ is a contraction, i.e. $\normsup{T_{t_1,t_2} f} \leq \normsup{f}$ for every $0 \leq t_1 \leq t_2 < \infty$, $ f \in \banach$;
    \item $T_{t_1,t_2}1 = 1$ for all $0 \leq t_1 \leq t_2 < \infty$.
\end{enumerate}
\end{definition}

If a Markov evolution system also satisfies
\begin{equation}\label{eq:feller}
    \lim_{(s,t)\rightarrow (v, w),\
    s\leq t}\|T_{s,t} f-T_{v,w} f\|_{\infty}=0
\end{equation}
it is called {\it Feller evolution system}. 
If the associated Markov process is homogeneous and $T_t(C_0(\RR^d))\subseteq C_0(\RR^d)$, it is a Feller process. 
Corresponding to an evolution system the family of right generators is defined by (\cite{bottcher2014feller})
\begin{equation}
    \mathcal{G}_{t}^+ f = \lim_{h\rightarrow 0+} h^{-1}(T_{t, t+h}f - f),
\end{equation}  
similarly, the family of left generators is defined as 
\begin{equation}
    \mathcal{G}_{t}^- f = \lim_{h\rightarrow 0+}h^{-1}(T_{t-h, t} f - f).
\end{equation}  
Following \cite{li2016additive}, we consider the right generators that we simply call generators and denote by $\mathcal{G}_t:=\mathcal{G}^+_t$. If the Markov process is homogeneous the Feller evolution reduces to a Feller convolution semigroup with generator 
\begin{equation}
    \mathcal{G} f = \lim_{t\rightarrow 0}t^{-1}(T_{t} f - f).
\end{equation}

Let $\mu_{t}(\xx, d\yy)=P(\XX(t)\in d\yy|\XX(0)=\xx)$.  With a slight abuse of notation, we denote by $\hat{\mu}_{t}(\xx, \boldsymbol{\xi})$ the characteristic function of the random variable $\XX(t)-\xx$, that is
\begin{equation}
    \hat{\mu}_{t}(\xx, \boldsymbol{\xi}) = \mathbb{E}_{\xx}[e^{i(\XX(t)-\xx) \cdot \boldsymbol{\xi}}],
\end{equation}
where $\mathbb{E}_{\xx}[\cdot] = \mathbb{E}[\cdot|\XX(0)=\xx].$
If $\XX$ is a L\'evy process we write $\hat{\mu}_{t}( \boldsymbol{\xi})=\hat{\mu}_{t}(\boldsymbol{0}, \boldsymbol{\xi})$, that justifies our notation. 

The following definition generalizes the characteristic exponent of a Lévy process to the Feller setting.

\begin{definition}[Definition 1.6 \cite{jacob2001levy}]
    For a Feller process $\XX(t)$ the function
    \begin{equation}
        q(\xx, \xxi) = 
        -\lim_{t\rightarrow 0}\frac{\mathbb{E}_{\xx} [e^{i(\XX(t)-\xx) \cdot \xxi}]-1}{t}
        =
        \left. - \frac{d}{dt}\hat{\mu}_{t}(\xx, \boldsymbol{\xi}) \right\vert_{t=0}
    \end{equation}
    is called the symbol of the process.   
\end{definition}
Although the symbol of a Feller process does not coincide in general with the characteristic exponent, it can be used to describe many probabilistic properties of the process; for example sample path properties.
In this case the symbol of a Feller process assumes the same role as the characteristic exponent of a Lévy process (\cite{schilling2016introduction}).
Nevertheless, the short-time path behavior of a Feller process, resembles that of a Lévy process with characteristic exponent coinciding with the symbol $q(\xx, \xxi)$. 
This local behavior is the reason why such processes are often referred to as \emph{Lévy-type processes}.

From Theorem 1.4 in \cite{jacob2001levy}, if $\XX(t)$ is a Feller process, the operators semigroup $(T_{t})_{t \geq 0}$ restricted to the Schwartz space $\mathcal{S}(\RR^d)$ of rapidly decreasing functions is a family of pseudo-differential operators, that is they have the representation
\begin{equation}\label{eq:ChTi}
    T_{t} f(\xx) = (2\pi)^{-d/2} \int_{\RR^d}e^{i \xx \cdot \boldsymbol{\xi}} \hat{\mu}_{t}(\xx, \boldsymbol{\xi})\hat{f}(\boldsymbol\xi)d\boldsymbol\xi,
\end{equation}
where $\hat{f}$ denotes the Fourier transform of $f$, $\hat{f}(\xxi) = (2\pi)^{-\tfrac{d}{2}} \int_{\RR^d} e^{-i\xx \cdot \xxi} f(\xx) d\xx$. 

Thanks to the Feller property of $\XX(t)$, the symbol of the process (whenever it exists) has the following L\'evy-Khintchine representation:
\begin{equation}
\begin{split}
    q(\xx, \boldsymbol{\xi})
    &=
    i \boldsymbol{\gamma(\xx)} \cdot \boldsymbol{\xi} -\frac{1}{2}\boldsymbol{\xi} \cdot {\Sigma(\xx)}\boldsymbol{\xi} +\int_{\RR^d}(e^{i\xxi \cdot \yy}-1-i \xxi \cdot \yy \mathbbm{1}_{B}(\yy)){\nu}(\xx,d\yy),
    \end{split}
    \end{equation}
where $\mathbbm{1}$ is the indicator function, $B$ is the unit ball in $\RR^d$, and where for any $\xx \in \RR^d$, $\gamma(\xx)$ is a deterministic drift, $\Sigma(\xx)$ is a symmetric non-negative definite matrix, and ${\nu}(\xx,d\yy)$ is a L\'evy measure.
The generator $\mathcal{G}$ of the semigroup $(T_t)_{t \geq 0}$ associated to $\XX(t)$ has the following pseudo-differential representation
\begin{equation} \label{eq:pseudo-diff-repr}
    \mathcal{G} f(\xx)
    =
    -(2\pi)^{-d/2}
    \int_{\RR^d} e^{i\xx \cdot \boldsymbol{\xi}} q(\xx, \boldsymbol{\xi}) \hat{f}(\boldsymbol\xi)d\boldsymbol\xi.
\end{equation}

\subsection{Multiparameter Markov process}

We refer to \cite{jacob2010multiparameter} for multiparameter convolution semigroups of probability measures,  multiparameter semigroups of operators, and their links. 
\begin{definition}
    A $k$-parameter family $(T_{\ss})_{\ss\succeq \boldsymbol{0}},$ of bounded linear operators $T_{\ss}$ on $\banach$ is called $k$-parameter semigroup of operators if $T_{\boldsymbol{0}}=Id$ and
     $T_{\ss+\tt}=T_{\ss}T_{\tt}$.
\end{definition}

Let us introduce the partial ordering $\preceq$ on $\RR^k$, such that for $\ss, \tt \in \RR^k$, we have $\ss \preceq \tt$ if $s_j \leq t_j$, for every $j \in \{1,\dots,k\}$.
Given a probability space $(\Omega,\mathcal{A},\mathbb{P})$, the family $\mathcal{F} = (\mathcal{F}_{\ss}, \ss \in \RR^k_+)$ is a $k$-parameter filtration if, for every $\ss \in \RR^k_+$, $\mathcal{F}_{\ss}$ is a $\sigma$-algebra and $\mathcal{F}_{\ss} \subseteq \mathcal{F}_{\tt} \subseteq \mathcal{A}$, for every $\ss,\tt \in \RR^k_+$ such that $\ss \preceq \tt$.

\begin{definition} [\cite{khoshnevisan2006multiparameter}]\label{def:HomogeneousMArkovProcess}
    A $k$-parameter stochastic process $(\XX(\ss),\ss \in \RR^k_+)$ is a \emph{homogeneous Markov process} if there exist a $k$-parameter filtration $\FFF = (\FFF_{\ss}: \ss \in \RR^k_+)$ and a family of operators $(T_{\ss})_{\ss\succeq0}$ such that, for all $\xx \in \RR^d$, there exists a probability measure $\mathbb{P}_{\xx}$ such that
    \begin{enumerate}[label=\textup{(\alph*)}]
        \item $(\XX({\ss}))_{\ss \in \RR^k_+}$ is adapted to $(\FFF_{\ss})_{\ss \in \RR^k_+}$;
        \item The trajectories $\ss\mapsto \XX(\ss)$ are $\mathbb{P}_{\xx}$ a.s. c\`adl\`ag --- in the partial ordering $\preceq$\@;
        \item For all $\ss\in \RR^k_+$, $\FFF_{\ss}$ is $\mathbb{P}_{\xx}$ complete. Moreover $\mathcal{F}$ is a commuting $\sigma$-algebra with respect to all measures $\mathbb{P}_{\xx}.$
        \item \label{point:LinOperator} $\mathbb{E}_{\xx}[f(\XX(\ss+\tt)) \vert \mathcal{F}_{\ss}] = T_{\tt} f(\XX(\ss))$, for every $f \in \banach$, $\ss,\tt \in \RR^k_+$;
        \item $\mathbb{P}_{\xx}(\XX(\boldsymbol{0})=\xx) = 1$. 
    \end{enumerate}
\end{definition}

\begin{remark}
    We observe that a $k$-parameter stochastic process $(\XX(\ss),\ss \in \RR^k_+)$ is a random field as natural generalization of a stochastic process in which the time interval is replaced by a different set (see \cite{applebaum2009levy}, section 1.1.8 or \cite{khoshnevisan2006multiparameter}).
\end{remark}

The multiparameter L\'evy process introduced in \cite{ba} is an example of a stochastically continuous multiparameter Markov process, see Example \ref{ex:1} for the semigroup associated to a multiparameter L\'evy process. Another important example is the following.
\begin{example}\label{ex:sum}
    Let $\XX_j(s_j)$, for $j \in \{1,\dots,k\}$, be independent Markov processes, then, the process $\XX(\ss;k) = (\XX_1(s_1), \dots, \XX_k(s_k))$ is a $k$-parameter Markov process. 
    The proof is provided in Section 2.2 of Chapter 11 in \cite{khoshnevisan2006multiparameter}.
\end{example}
From Item \ref{point:LinOperator} in Definition \ref{def:HomogeneousMArkovProcess}, it follows that, for $\ss \in \RR^k_+$ and $\xx \in \RR^d$,
\begin{equation*}
    T_{\ss}f(\xx) = 
    \mathbb{E}_{\xx}[T_{\ss}f(\XX(\boldsymbol{0}))]
    =
    \mathbb{E}_{\xx}[ \mathbb{E}_{\xx} (f(\XX(\boldsymbol{s}))\vert \FFF_{\boldsymbol{0}})]
    =
    \mathbb{E}_{\xx}[f(\XX(\boldsymbol{s}))]
    =
    \int_{\RR^d}f(\yy)\mu_{\ss}(\xx, d\yy),
\end{equation*}
where $\mu_{\ss}(\xx,d\yy) = \mathbb{P}_{\xx} (\XX(\ss)\in d\yy )$.
It holds $T_{\ss+\tt} = T_{\ss} + T_{\tt}$ for any $\ss,\tt \in \RR^k_+$, and the family of operators $(T_{\ss})_{\ss\succeq0}$ is a $k$-parameter semigroup of operators on $\banach$.
Observe that $\mu_{\ss}$ is a $k$-parameter convolution semigroup, see \cite{pedersen2004relations}.
\begin{remark}As mentioned in the introduction, two Markov processes with the same semigroup of operators do not have necessarily the same finite dimensional joint distributions, since the system of distributions $(\XX(\ss_1),\ldots, \XX(\ss_n))$ with $\ss_1 \preceq \ss_2 \preceq \ldots \preceq \ss_n$, $n \in \{1,2,\dots\}$, does not determine the whole system of finite dimensional distributions.
However, the system of distributions $(\XX(\ss_1),\ldots, \XX(\ss_n))$  along the directions given by the partial order is sufficient to identify the finite dimensional distributions of a process obtained by subordination.
\end{remark}

An important result for the practical construction of time inhomogeneous subordinated Markov processes is that multiparameter semigroups factorize into direct product of commuting one parameter semigroups (see \cite{mendoza2016multivariate}, \cite{butzer2013semi} and \cite{baeumer2008subordinated}).

\begin{proposition}[{\cite{mendoza2016multivariate}}]\label{prop:prod}
If $(T_{\ss})_{\ss\succeq 0}$ is a $k$-parameter strongly continuous semigroup on  $\banach$ then it is the direct product of $k$ one-parameter strongly continuous semigroups $(T_{s}^{(j)})_{s\geq 0}$, $j=1,\ldots, k$, on $\banach$ with infinitesimal generator $\mathcal{G}^{(j)}$ with domain $\text{Dom}(\mathcal{G}^{(j)})\subset \RR^d $ and the semigroups commute each other. The set of generators $(\mathcal{G}^{(j)}, \text{Dom}(\mathcal{G}^{(j)}))$ is called set of generators of the $k$-parameter semigroup $(T_{\ss})_{\ss\geq 0}$.
\end{proposition}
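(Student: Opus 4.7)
The plan is to realize each of the $k$ one-parameter semigroups as the restriction of the multiparameter semigroup along a coordinate axis. Concretely, for $j \in \{1,\dots,k\}$, I would set
\begin{equation*}
T^{(j)}_s := T_{s\ee_j}, \qquad s \geq 0,
\end{equation*}
where $\ee_j$ is the $j$-th canonical basis vector of $\RR^k$. Everything should then fall out of the definition of a $k$-parameter semigroup of operators and the assumed strong continuity.

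First I would verify that each $(T^{(j)}_s)_{s\geq 0}$ is a one-parameter strongly continuous semigroup on $\banach$. The identity at $s=0$ is immediate from $T_{\boldsymbol{0}} = \mathrm{Id}$. The semigroup property follows from
\begin{equation*}
T^{(j)}_{s+t} = T_{(s+t)\ee_j} = T_{s\ee_j + t\ee_j} = T_{s\ee_j}\,T_{t\ee_j} = T^{(j)}_s\,T^{(j)}_t,
\end{equation*}
where the middle equality is the additive semigroup law applied along the $j$-th ray. Strong continuity of $s\mapsto T^{(j)}_s$ is inherited from the strong continuity of $\ss \mapsto T_{\ss}$ restricted to the one-dimensional subset $\{s\ee_j : s\geq 0\}$.

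Next I would prove commutativity. For $i,j \in \{1,\dots,k\}$ and $s,t \geq 0$, using the semigroup law twice,
\begin{equation*}
T^{(i)}_s T^{(j)}_t = T_{s\ee_i}T_{t\ee_j} = T_{s\ee_i + t\ee_j} = T_{t\ee_j + s\ee_i} = T_{t\ee_j}T_{s\ee_i} = T^{(j)}_t T^{(i)}_s,
\end{equation*}
since addition in $\RR^k_+$ is commutative. Iterating this, for any $\ss = (s_1,\dots,s_k)\in\RR^k_+$, the decomposition $\ss = s_1\ee_1 + \cdots + s_k\ee_k$ and $k-1$ applications of the semigroup law yield
\begin{equation*}
T_{\ss} = T_{s_1\ee_1}T_{s_2\ee_2}\cdots T_{s_k\ee_k} = T^{(1)}_{s_1}T^{(2)}_{s_2}\cdots T^{(k)}_{s_k},
\end{equation*}
which is exactly the factorization into a direct product of commuting one-parameter semigroups. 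Finally, each $(T^{(j)}_s)_{s\geq 0}$ being strongly continuous on $\banach$ admits, by standard semigroup theory, an infinitesimal generator $\mathcal{G}^{(j)}$ defined as the strong limit $\mathcal{G}^{(j)} f = \lim_{s\to 0^+} s^{-1}(T^{(j)}_s f - f)$ on its natural domain $\mathrm{Dom}(\mathcal{G}^{(j)})$, completing the identification.

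I do not expect a genuine obstacle here: the statement is essentially a restructuring of the abstract semigroup axioms along coordinate directions, and the proof reduces to unwinding definitions once the correct candidates $T^{(j)}_s = T_{s\ee_j}$ are identified. The only point that deserves a careful check is that strong continuity along each coordinate ray suffices to conclude the one-parameter strong continuity needed to invoke standard generator theory, which is automatic from the hypothesis on the full $k$-parameter family.
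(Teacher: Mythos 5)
Your proposal is correct, and it is the standard argument: the paper itself gives no proof of this proposition (it is quoted from the cited reference), and the intended justification is exactly the definitional unwinding you perform — setting $T^{(j)}_s := T_{s\ee_j}$, deriving the one-parameter semigroup law, commutativity, and the factorization $T_{\ss}=T^{(1)}_{s_1}\cdots T^{(k)}_{s_k}$ from $\ss=\sum_j s_j\ee_j$ and the additive law $T_{\ss+\tt}=T_{\ss}T_{\tt}$, with the generators supplied by standard one-parameter semigroup theory. The only cosmetic remark is that the domain $\mathrm{Dom}(\mathcal{G}^{(j)})$ is of course a subspace of $\banach$ rather than of $\RR^d$ (a typo in the statement), which your construction of the generator as a strong limit implicitly gets right.
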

Let us define  $\prod_{j=1}^kT^{(j)}_{s_j}:=T^{(1)}_{s_1}\cdots T^{(k)}_{s_k}$ where $T^{(1)}_{s_1}\cdots T^{(k)}_{s_k}$ stands for composition of commuting operators. 
It follows from Proposition \ref{prop:prod} that
\begin{equation}
T_{\ss}=\prod_{j=1}^kT^{(j)}_{s_j},\quad \ss\in\RR^k_+,
\end{equation}
and we also notice that 
\begin{equation} \label{eq:splitT2}
T^{(j)}_{s_j}f(\xx)=T_{s_j\cdot \ee_j}f(\xx),
\end{equation} 
where $\ee_j, j=1,\ldots k$ is the canonical basis of $\RR^k$, see \cite{khoshnevisan2006multiparameter}. The one-parameter operators $T_{s_j}^{(j)}$ are called marginal transition operators.

The following Proposition \ref{prop:sum} is an extension of Proposition 2.7 proved in \cite{iafrate2024some} and is a direct consequence of Proposition \ref{prop:prod}.

\begin{proposition}\label{prop:sum}
Let $\XX(\ss)$ be the $k$-parameter  Markov process associated to  the $k$-parameter strongly continuous semigroup $(T_{\ss})_{\ss\succeq 0}$. There exist $k$ independent Feller processes $\XX^{(j)}({s_j})$, $j=1,\ldots, k$, with $\XX^{(j)}({s_j})=_{\mathcal{L}}\XX(s_j\ee_j),$ such that
\begin{equation}\label{eq:sumInd}
    \XX(\ss)=_{\mathcal{L}}\sum_{j=1}^k\XX^{(j)}(s_j), \quad s_j\geq 0,
\end{equation}
\end{proposition}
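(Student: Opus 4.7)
The plan is to combine the semigroup factorization provided by Proposition \ref{prop:prod} with the spatial homogeneity of the transition kernels noted in the remark following Definition \ref{def:HomogeneousMArkovProcess}, namely $\mathcal{L}(\XX(\tt)-\XX(\ss))=\mathcal{L}(\XX(\tt-\ss))$.

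First, I would apply Proposition \ref{prop:prod} to write $T_{\ss}=\prod_{j=1}^k T^{(j)}_{s_j}$ as the composition of $k$ mutually commuting one-parameter strongly continuous semigroups on $\banach$. Each $(T^{(j)}_s)_{s\geq 0}$ is the transition semigroup of the one-parameter marginal process $\XX(s\,\ee_j)$, which is Feller since $(T^{(j)}_s)$ inherits the relevant regularity from $(T_{\ss})$; denote the associated Feller process by $\XX^{(j)}$, so that by construction $\XX^{(j)}(s_j)=_{\mathcal L}\XX(s_j\ee_j)$.

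Next, I would place $k$ independent realizations of these processes on a common product probability space and define $\YY(\ss):=\sum_{j=1}^k \XX^{(j)}(s_j)$. The key step is to identify the law of $\YY(\ss)$ with that of $\XX(\ss)$, which I would do via characteristic functions. Using independence together with the spatial homogeneity coming from the remark after Definition \ref{def:HomogeneousMArkovProcess}, one can iteratively rewrite
\begin{equation*}
\mathbb{E}_{\xx}\!\left[e^{i\xxi\cdot \YY(\ss)}\right]
=\prod_{j=1}^k \mathbb{E}\!\left[e^{i\xxi\cdot \XX^{(j)}(s_j)}\right]
=\Bigl(\prod_{j=1}^k T^{(j)}_{s_j}\, e^{i\xxi\cdot\,\boldsymbol\cdot}\Bigr)(\xx)
=T_{\ss}\, e^{i\xxi\cdot\,\boldsymbol\cdot}(\xx)
=\mathbb{E}_{\xx}\!\left[e^{i\xxi\cdot \XX(\ss)}\right],
\end{equation*}
where the third equality is justified by the commutativity of the factor semigroups, and the last one is Definition \ref{def:HomogeneousMArkovProcess}\,(\ref{point:LinOperator}). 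This yields equality of one-dimensional marginals; to promote it to equality in law along the directions of the partial order $\preceq$ (which is precisely what identifies the finite-dimensional distributions relevant here, as recalled in the remark preceding the statement), I would invoke the Markov property of both $\XX$ and $\YY$ together with the identity of one-step transition laws just established, and conclude by the standard tower argument over chains $\ss_1\preceq\ss_2\preceq\cdots\preceq\ss_n$.

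The main obstacle is the passage from the operator-level identity $T_{\ss}=\prod_{j=1}^k T^{(j)}_{s_j}$ to the pathwise sum $\YY=\sum_j \XX^{(j)}$: this would fail for arbitrary Markov processes, and the crucial input is precisely the translation invariance of the transition kernels that is built into the homogeneous multiparameter setting of Definition \ref{def:HomogeneousMArkovProcess}. A secondary technical point is verifying that each factor $(T^{(j)}_s)$ really generates a Feller (and not merely strongly continuous) semigroup, which requires checking the Feller property on $C_0(\RR^d)$ for each commuting factor; this should follow from the structure of $(T_{\ss})$ restricted along each coordinate axis and the identification $\XX^{(j)}(s_j)=_{\mathcal L}\XX(s_j\ee_j)$.
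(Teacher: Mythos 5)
Your argument follows the same route the paper intends: the paper gives no written proof of Proposition \ref{prop:sum}, declaring it a direct consequence of Proposition \ref{prop:prod} (together with a citation to Proposition 2.7 of the reference it names), and your reconstruction via the factorization $T_{\ss}=\prod_{j=1}^{k}T^{(j)}_{s_j}$ plus a characteristic-function identification is the natural way to fill that in. One point needs repair, though. Writing $e_{\xxi}(\yy)=e^{i\xxi\cdot\yy}$, the pivotal equality
\begin{equation*}
\prod_{j=1}^{k}\mathbb{E}\bigl[e^{i\xxi\cdot\XX^{(j)}(s_j)}\bigr]
=\Bigl(\prod_{j=1}^{k}T^{(j)}_{s_j}e_{\xxi}\Bigr)(\xx)
\end{equation*}
fails for a general time-homogeneous Markov semigroup (the Ornstein--Uhlenbeck kernel is a counterexample): a composition of transition operators collapses into a product of Fourier transforms only when each $\mu^{(j)}_{s}(\xx,d\yy)$ is a convolution, i.e.\ translation-invariant, kernel. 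You recognize this, but you attribute the required translation invariance to Definition \ref{def:HomogeneousMArkovProcess}; that definition only encodes time homogeneity, and the invariance you need is in fact an explicit hypothesis of the proposition, which takes $\mu_{\ss}=\mathcal{L}(\XX(\ss))$ to be the \emph{associated convolution semigroup}. Once that hypothesis is invoked your proof closes, and in fact shortens: decomposing $\ss=\sum_{j}s_j\ee_j$ and iterating $\mu_{\ss+\tt}=\mu_{\ss}*\mu_{\tt}$ gives $\mu_{\ss}=\mu_{s_1\ee_1}*\cdots*\mu_{s_k\ee_k}$ directly, which is exactly \eqref{eq:sumInd} with $\XX^{(j)}(s_j)=_{\mathcal{L}}\XX(s_j\ee_j)$ independent. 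A minor bookkeeping issue: starting every $\XX^{(j)}$ at $\xx$ would make the sum start at $k\xx$, so on the product space the summands should start at $\boldsymbol{0}$ (or only one should carry the initial point), consistently with $\mu_{\boldsymbol{0}}=\delta_{\boldsymbol{0}}$. Your final step --- upgrading equality of one-dimensional marginals to the finite-dimensional distributions along chains $\ss_1\preceq\cdots\preceq\ss_n$ via the Markov property --- is the right reading of the conclusion and matches the paper's remark that only those distributions are determined, and only those are needed for subordination.
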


\begin{corollary}\label{cor:indep}
    Let $(\XX(\ss))_{\ss \succeq \boldsymbol{0}}$ be a $d$-parameter process that takes value on $\RR^d$, with $\XX(\ss) = \XX(s_1,\dots,s_d) = (X_1(\ss),\dots,X_d(\ss))$. 
    Then, there exist $d$ independent $\RR$-valued Markov processes $(X^{(j)}(t))_{t \geq 0}$ such that
    \begin{equation} \label{cwMP}
        \XX(\ss)=_{\mathcal{L}}(X^{(1)}(s_1),\ldots , X^{(d)}(s_d)),\quad \ss \succeq \boldsymbol{0},
    \end{equation}
    if and only if $X_j(\ss) = X_j(s_j \ee_j)$, for every $j \in \{1,\dots,d\}$.
\end{corollary}
\begin{proof}
Obviously, if \eqref{cwMP} holds, the $d$-parameter stochastic process $\XX(\ss)$ satisfies $X_j(\ss) = X_j(s_j \ee_j) =_{\mathcal{L}} X^{(j)}(s_j)$. 
Let now $\XX(\ss)$ be a $d$-parameter process on $\RR^d$ such that $X_j(\ss) =_{\mathcal{L}} X_j(s_j)$, for every $j \in \{1,\dots,d\}$.
From Proposition \ref{prop:sum}, there exist $d$ independent processes $\ZZ^{(j)}(s_j)$ on $\RR^d$ such that $\XX(\ss) =_{\mathcal{L}} \sum_{j=1}^d \ZZ^{(j)}(s_j)$.
For any $h = 1,\dots,d$, we have
\begin{equation}
    X_h(\ss)=_{\mathcal{L}}\sum_{j=1}^d Z^{(j)}_{h}(s_j),
\end{equation}
and since $X_h(\ss)$ depends only on $s_h$, it follows that $Z^{(j)}_{h}(s_j)=0$ a.s.\@, for all $j \neq h$, and \eqref{cwMP} holds.
\end{proof}

We name the process $\XX(\ss)$ in Corollary \ref{cor:indep} coordinate-wise multiparameter process  (cw-multiparameter process).

\begin{example}{\bf Lévy semigroups.}
\label{ex:1}
For an overview of operator semigroups and L\'evy processes see \cite{applebaum2009levy}.
We refer to  \cite{pedersen2004relations} for L\'evy multiparameter processes and multiparameter convolution semigroups of probability measures.  
Any $k$-parameter  convolution semigroup $(\mu_{\tt})_{\tt\in \RR^k_+}$  gives rise to a $k$-parameter L\'evy process and therefore to a $k$-parameter operator semigroup $(T_{\ss})_{\ss\succeq 0}$
\begin{equation}\label{eq:MultiSemi}
    T_{\ss} f(\xx) = E[f(\xx+\XX_{\ss})] = \int_{\RR^d} f(\xx+\yy)\mu_{\ss}(d\yy),
\end{equation}
for all $\ss\in \RR^k_+$ and $f \in \banach$. 
Indeed, $(T_{\ss})_{\ss\succeq 0}$ is a strongly continuous contraction semigroup on $\RR^d$ which is positivity preserving.
Hence, it is a Feller semigroup (see \cite{jacob2010multiparameter}).
    Let $\XX^{(j)}(s_j)$, for $j\in \{1,\ldots,k\}$, be independent L\'evy processes, then $\XX(\ss) = \sum_{j=1}^k \XX^{(j)}(s_j)$ is a $k$-parameter Markov process, as proved in \cite{ba}.
Let ${{\BB}}^{(j)}(t)$ be  Brownian motions on
$\mathbb{R}^{d}$ with drift ${\boldsymbol{\mu}_j}$ and covariance matrix
${\Sigma_{j}}$.  We can define the multiparameter L\'evy
process 
\begin{equation}
    \BB(\ss) = \sum_{j=1}^k{\BB}^{(j)}(s_j), \quad {s}\in\mathbb{R}^{k}_{+},
\end{equation}
that we call multiparameter Brownian motion.
We have
\begin{equation}
\BB(s_j \ee_j) = B_j(s_j\ee_j)=\BB^{(j)}(s_j).
\end{equation}
\end{example}
\section{Multiparameter Additive subordination} \label{sec:MultiparameterAssitiveSUB}
This section introduces time-inhomogeneous additive subordinators. An additive subordinator is an increasing process with non stationary independent increments and continuous in probability. Additive processes are obtained from L\'evy ones by relaxing the condition of stationary increments, hence they are spatially (but not temporally)
homogeneous (see \cite{sa}).

Let $\SS(t)$ be an additive subordinator
and let $\pi_{t_1,t_2}$ be the distribution of its increment $\SS(t_2)-\SS(t_1)$. An additive process is completely determined by the set of measures $\pi_{t_1,t_2}$ since all the finite-dimensional distributions are completely specified (\cite{beghin2019additive}). 
Adapting Proposition 2.1 in \cite{li2016additive} we have that an additive subordinator is a semimartingale with a time dependent characteristic exponent. 
In fact, the time $t$ characteristic function of an additive subordinator $\SS(t)$ is  (\cite{mendoza2016multivariate})
\begin{equation}\label{CHSbv}
    \hat{\mu}^S_t(\xxi)
    =
    \exp \left\{ i \boldsymbol{c}(t) \cdot \xxi +\int_{\RR_+^d \setminus \{\boldsymbol{0} \}} (e^{i \xx \cdot \xxi}-1)\nu(t, d\xx) \right\}, \quad \xxi \in \RR^k,
\end{equation}
where $\boldsymbol{\boldsymbol{c}}(t)\in \RR^k_+$ is the time dependent drift and  $\nu(t,d\xx)$ is  a  time-dependent L\'evy measure.  
Although they are not Lévy processes, they somehow generalize subordinators in the sense that their Laplace exponents are possibly different
Bernstein functions for each time $t$. 
Because of time-inhomogeneity, additive subordinators sometimes go under the name of non-homogeneous subordinator (see \cite{orsingher2016time}) and define two-parameter semigroups (or propagators).

We want to focus on the additive subordination of the multiparameter Markov evolution defined in previous section. To this aim, a key tool for studying the resulting time-changed process is the derivation of an appropriate Phillips Theorem.

The multiparameter version of Phillips Theorem has been proved by \cite{baeumer2008subordinated}. 
The next result is an extension to multiparameter additive subordination. 
In particular, it is a multiparameter version of Theorem 3.1 in \cite{li2016additive} and a generalization of Theorem 2.3 in \cite{mendoza2016multivariate} to additive subordinators.

\begin{theorem}\label{teo:sub}
    Let $(\SS(t))_{t\geq0}$ be a $k$-dimensional additive subordinator.
    Let $(T_{\ss})_{\ss \succeq 0}$ be a $k$-parameter strongly continuous semigroup on $\banach$, associated to a multiparameter Markov process $\XX(\ss)$ independent of $\SS(t)$. The two-parameter family of operators $(T_{t_1, t_2})_{t_1\leq t_2}$,
  \begin{equation} \label{eq:DefOpY}
T_{t_1, t_2} f(\xx) = \int_{\RR^k_+} T_{\ss}f(\xx) \, \pi_{t_1,t_2}(d\ss),
\end{equation}
for $f \in \banach$, where the transition operator $\pi_{t_1,t_2}(d\ss)$ is the distribution of the increment $\SS({t_2})-\SS({t_1})$, is a Markov evolution associated to the subordinated process
\begin{equation}\label{eq:asp1}
    \YY(t)=\XX(\SS(t)), \quad t\geq 0,
\end{equation}
that is a time inhomogeneous normal Markov process with transition probabilities given by $p_{t_1,t_2}(\xx,d\yy) = \int_{\RR^k_+} \mu_{\ss}(\xx,d\yy) \pi_{t_1,t_2}(d\ss)$, where $\mu_{\ss}(\xx,d\yy)$ is the law of $\XX(\ss)$ starting from $\xx \in \RR^d$.
\end{theorem}
\begin{proof}
By Fubini's theorem, we have:
\begin{equation}\label{eq:trans}
\begin{split}
    T_{t_1, t_2}f(\xx) 
    &= \int_{\RR^k_+}T_{\ss}f(\xx)\pi_{t_1,t_2}(d\ss) 
    =
    \int_{\RR^k_+} \mathbb{E}_x[f(\XX_{\ss})]\pi_{t_1,t_2}(d\ss)
    \\
    &= \int_{\RR^k_+}\int_{\RR^d}f(\yy)\mu_{\ss}(\xx, d\yy)\pi_{t_1,t_2}(d\ss)
    =
    \int_{\RR^d}f(\yy)p_{t_1,t_2}(\xx, d\yy),
\end{split}
\end{equation}
where $p_{t_1,t_2}(\xx, d\yy) = \int_{\RR^k_+}\mu_{\ss}(\xx, d\yy)\pi_{t_1,t_2}(d\ss)$. 
It can be shown that $p_{t_1,t_2}(\xx, d\yy)$ are probability measures, and that they satisfy the Chapman-Kolmogorov equations.
Indeed, making use of Fubini's theorem, we have 
\begin{equation}\label{eq:kol}
\begin{split}
\int_{\RR^d}p_{u,t_2}(\yy, A) p_{t_1,u}(\xx, d\yy) &=
\int_{\RR^d} \int_{\RR^k_+}\mu_{\ss_1}(\yy, A)\pi_{u,t_2}(d\ss_1)\int_{\RR^k_+}\mu_{\ss_2}(\xx, d\yy)\pi_{t_1,u}(d\ss_2)\\
 &=
\int_{\RR^k_+}\int_{\RR^k_+}\int_{\RR^d}\mu_{\ss_1}(\yy, A)\mu_{\ss_2}(\xx, d\yy)\pi_{u,t_2}(d\ss_1)\pi_{t_1,u}(d\ss_2)\\
 &\overset{(1)}{=}
 \int_{\RR^k_+}\int_{\RR^k_+}\mu_{\ss_1+\ss_2}(\xx, A)\pi_{u,t_2}(d\ss_1)\pi_{t_1,u}(d\ss_2)\\
 &\overset{(2)}{=}\int_{\RR^k_+}\mu_{\ss}(\xx, A)\pi_{t_1,t_2}(d\ss)=p_{t_1,t_2}(\xx, A), 
 \end{split}
 \end{equation}
where (1) follows from the Markov evolution property of $\mu_{\ss}$ in the partial ordering of $\RR^k$  and (2) follows from the convolution property of the transition probabilities $\pi_{t_1,t_2}$ of an additive subordinator.
Therefore, by Theorem 3.1.7 of \cite{applebaum2009levy}, there exists a time inhomogeneous Markov process $\YY(t)$ with transition probabilities $p_{t_1,t_2}(\xx,d \yy)$.
Moreover, since $\pi_{t_1,t_2}(d\yy)$ is the distribution of the increment $\SS(t_2)-\SS(t_1)$ and $\SS$ is independent of $\XX$, from the definition of $p_{t_1,t_2}(\xx,d\yy)$, we have $\YY(t) = \XX(\SS(t))$.
By \eqref{eq:trans}, we conclude that $(T_{t_1,t_2})_{t_1<t_2}$ is the Markov evolution associated to $\YY(t)$.
\end{proof}
The process $\YY(t)$ in \eqref{eq:asp1} is the key object of this work and we refer to it as  \textit{additive subordinated multiparameter Markov process}.
Theorem \ref{teo:sub} and Proposition \ref{prop:prod} lead to
\begin{equation}\label{eq:splitT}
    \begin{split}
        T_{t_1, t_2}f(\xx)=\int_{\RR^k_+}T^{(1)}_{s_1}T^{(2)}_{s_2}\cdots T^{(d)}_{ s_d}f(\xx)\pi_{t_1,t_2}(d\ss).
    \end{split}
\end{equation}

The following Theorem \ref{Li} provides the generator $\mathcal{G}_t$ of the subordinated process in \eqref{eq:asp1}.
The proof of Theorem \ref{Li} follows the same arguments of the proof of Theorem 3.1 in \cite{li2016additive}, using the multivariate version of Philips Theorem in \cite{mendoza2016multivariate} instead of Philips Theorem. See also \cite{mijatovic2010additive}.

\begin{theorem}
\label{Li}
Let $\nu$ be the L\'evy measure in \eqref{CHSbv} and define by $\nu_F(t,A) = \int_A( \lVert \ss \rVert \wedge 1) \nu(t, d\ss)$ for any Borel set $A \subseteq \RR^k_+$ a finite measure on $\RR^k_+$.
Assume that left and right limits of $c_j(t)$ exist finite for every $t\geq 0$ and $j=1,\ldots,k$.
Moreover, assume that left and right limits of $\nu_F(t,\cdot)$ converge weakly to a finite measure $\forall t\geq 0$ and that $c_j(t)$ and $\nu_F(t,\cdot)$ {are right continuous} and have only a finite number of points of discontinuity. 
The evolution system $(T_{t_1,t_2})_{t_1<t_2}$ is a strongly continuous backward as well as forward propagator.
Then the domains of the generators are such that $\cap_{j=1}^kD(\mathcal{G}^{(j)})\subseteq D(\mathcal{G}_t)$ for each $t\geq 0$ and
\begin{equation}\label{eq:generator}
    \mathcal{G}_t f
    =
    \sum_{j=1}^k {c}_j(t) \mathcal{G}^{(j)} f
    +
    \int_{\RR^k_+ \setminus \{\boldsymbol{0}\}}(T_{\ss}f-f) \nu(t, d\ss), 
    \quad 
    f
    \in \bigcap_{j=1}^kD(\mathcal{G}^{(j)}).
    \end{equation}
We also have for $0\leq s<t$ that
\begin{equation}
    \lim_{h\rightarrow 0+ } h^{-1} (T_{t_1, t_2+h}f - T_{t_1,t_2}f) 
    = 
    T_{t_1,t_2} \mathcal{G}_{{t_2}}f = \mathcal{G}_{{t_2}} T_{t_1,t_2}f.
\end{equation}  
\end{theorem}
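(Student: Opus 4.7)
The plan is to adapt the proof of Theorem 3.1 in \cite{li2016additive} to the multiparameter setting, using Proposition \ref{prop:prod} and the multiparameter Phillips theorem of \cite{mendoza2016multivariate,baeumer2008subordinated} in place of the classical one-parameter version. Starting from the representation
\[
T_{t_1, t_2} f(\xx) = \int_{\RR^k_+} T_{\ss} f(\xx)\, \pi_{t_1, t_2}(d\ss)
\]
of Theorem \ref{teo:sub}, I would first dispatch the strong continuity, then compute the generator by differentiating at $t_2 = t_1 = t$ via the Lévy--Khintchine representation \eqref{CHSbv}, and finally deduce the forward--backward identity from Chapman--Kolmogorov.

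For strong continuity, the key observation is that the map $(t_1, t_2) \mapsto \pi_{t_1, t_2}$ is weakly right-continuous at every $t$ and has left limits off a finite set, which is exactly what the hypotheses on $c_j(t)$ and on $\nu_F(t,\cdot)$ deliver through the Lévy--Khintchine formula. Combining this with the fact that $\ss \mapsto T_{\ss} f$ is continuous in the supremum norm (from strong continuity of the multiparameter semigroup in Proposition \ref{prop:prod}) and uniformly bounded by $\|f\|_\infty$, bounded convergence yields the forward and backward strong continuity of the propagator. For the generator, I would use that, for $f \in \bigcap_j D(\mathcal{G}^{(j)})$,
\[
\frac{T_{t,t+h}f(\xx) - f(\xx)}{h}
=
\int_{\RR^k_+} \frac{T_{\ss}f(\xx) - f(\xx)}{h}\, \pi_{t,t+h}(d\ss),
\]
and split the domain of integration at $\|\ss\| = 1$. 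On $\{\|\ss\| < 1\}$, the first-order expansion $T_{\ss} f - f = \sum_j s_j \mathcal{G}^{(j)} f + o(\|\ss\|)$, available from the commuting factorization $T_{\ss} = \prod_j T^{(j)}_{s_j}$ of Proposition \ref{prop:prod} and the definition of each $\mathcal{G}^{(j)}$, pairs with the short-time asymptotics of $\pi_{t,t+h}$ in \eqref{CHSbv} to yield $\sum_j c_j(t) \mathcal{G}^{(j)} f$, using the right-continuity of $c_j$ and weak right-continuity of the restriction of $\nu_F(t,\cdot)$ to the unit ball. On $\{\|\ss\| \ge 1\}$, the bound $\|T_{\ss}f - f\|_\infty \le 2\|f\|_\infty$ together with finiteness of $\nu_F(t,\cdot)$ produces $\int (T_{\ss} f - f)\, \nu(t, d\ss)$ in the limit. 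Summing the two pieces gives \eqref{eq:generator} and the inclusion $\bigcap_j D(\mathcal{G}^{(j)}) \subseteq D(\mathcal{G}_t)$.

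The forward--backward identity
\[
\lim_{h \to 0+} h^{-1}\bigl(T_{t_1, t_2+h}f - T_{t_1, t_2}f\bigr) = T_{t_1, t_2} \mathcal{G}_{t_2} f = \mathcal{G}_{t_2} T_{t_1, t_2} f
\]
then follows by applying the propagator identity $T_{t_1, t_2+h} = T_{t_1, t_2} T_{t_2, t_2+h}$ proved in Theorem \ref{teo:sub} and extracting the contraction $T_{t_1, t_2}$ from the limit on the right, together with commutation of $T_{t_1, t_2}$ with $\mathcal{G}_{t_2}$ on the invariant domain $\bigcap_j D(\mathcal{G}^{(j)})$; the latter is inherited from the pairwise commutation of the factor semigroups $T^{(j)}_{s}$ guaranteed by Proposition \ref{prop:prod}, which makes $T_{\ss}$ commute with each $\mathcal{G}^{(j)}$ and hence with the integral operator appearing in \eqref{eq:generator}.

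The main obstacle I anticipate is controlling the small-jump contribution in the generator calculation: because $\nu(t, d\ss)$ may be infinite near the origin, the first-order cancellation must be justified via the truncation through $\nu_F$ and a uniform estimate on the remainder $T_{\ss} f - f - \sum_j s_j \mathcal{G}^{(j)} f$ for $f \in \bigcap_j D(\mathcal{G}^{(j)})$, exactly as in the classical Phillips argument. The added complication is the time-inhomogeneity: the right-continuity and finite-discontinuity hypotheses on $c_j(t)$ and $\nu_F(t,\cdot)$ are precisely what one needs to invoke bounded convergence of $h^{-1}\pi_{t,t+h}$ toward the instantaneous triplet at $t$, and care is required to keep all estimates uniform in $\xx$ so that the limit holds in the $\|\cdot\|_\infty$ sense required by the definition of $\mathcal{G}_t$.
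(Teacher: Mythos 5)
Your proposal follows a genuinely different route from the paper. You differentiate the subordination integral $T_{t,t+h}f=\int_{\RR^k_+}T_{\ss}f\,\pi_{t,t+h}(d\ss)$ directly, in the style of the classical proof of Phillips' theorem, splitting at $\lVert\ss\rVert=1$ and pairing a first-order expansion of $T_{\ss}f-f$ with short-time asymptotics of the increment laws $\pi_{t,t+h}$. The paper instead never touches $\pi_{t,t+h}$ for small $h$: it freezes time, defines for each $t$ the Lévy subordinator $S^{\phi_t}$ with characteristics $(\boldsymbol{c}(t),\nu(t,\cdot))$, applies the already-known multiparameter Phillips theorem of \cite{mendoza2016multivariate} to obtain the frozen generators $\mathcal{G}^{\phi_t}$ in closed form, verifies (via the bound $\lVert T_{\ss}f-f\rVert\leq\min(2\lVert f\rVert,\sum_j s_j\lVert\mathcal{G}^{(j)}f\rVert)$ and the hypotheses on $c_j$ and $\nu_F$) that $t\mapsto\mathcal{G}^{\phi_t}f$ is c\`adl\`ag with finitely many discontinuities, and then invokes Goldstein's product/approximation theorem for abstract evolution equations to assemble the frozen semigroups into a strongly continuous propagator $U_{t_1,t_2}$ with right generators $\mathcal{G}^{\phi_t}$. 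A separate identification step, via Laplace transforms of the convolution $p^{\Pi}_{t_1,t_2}$ and weak convergence to $\pi_{t_1,t_2}$, shows $U_{t_1,t_2}=T_{t_1,t_2}$. What the paper's route buys is that all the delicate time-inhomogeneous analysis is outsourced to Goldstein's theorem, at the cost of the extra identification argument; what your route would buy is a self-contained, more elementary computation that also makes the forward--backward identity transparent from Chapman--Kolmogorov.

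The caveat is that the step you yourself flag as the main obstacle is precisely where your argument is not yet a proof. You need that $h^{-1}\pi_{t,t+h}$ converges, as $h\to0+$, to the instantaneous characteristics at $t$ in a sense strong enough to pass the limit through both the compensated small-jump integral and the drift term, uniformly in $\xx$. The theorem's hypotheses are phrased in terms of regularity of $c_j(t)$ and of the finite measure $\nu_F(t,\cdot)$, and translating these into the required vague/weak convergence of the normalized increment laws of an additive subordinator (including the correct handling of the tightness of mass near the origin, where $\nu(t,\cdot)$ may be infinite) is a nontrivial lemma that you assert rather than establish; it is essentially equivalent in difficulty to the Goldstein machinery the paper uses to avoid it. Note also that your claim that the small-jump region ``yields $\sum_j c_j(t)\mathcal{G}^{(j)}f$'' is not quite the right bookkeeping: since the subordinator's Lévy measure satisfies $\int(\lVert\ss\rVert\wedge1)\nu(t,d\ss)<\infty$, the limit formula \eqref{eq:generator} carries no compensation, so the region $\{\lVert\ss\rVert<1\}$ must contribute both the drift term and the portion $\int_{\{\lVert\ss\rVert<1\}}(T_{\ss}f-f)\nu(t,d\ss)$ of the integral; this decomposition needs to be made explicit for the limit to come out right. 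Until that convergence lemma is supplied, the proposal is a plausible programme rather than a complete alternative proof.
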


\begin{proof}
    Let us call $\SS^{\phi_t}$ the $k$-dimensional Lévy subordinator with drift $\boldsymbol{c}(t)$ and Lévy measure $\nu(t,\cdot)$, and let us denote by $\pi_v^{\phi_t}(d\ss)$ its law at time $v \in \RR_+$. The strongly continuous one-parameter semigroup $(T^{\phi_t}_v)_{v \geq 0}$ corresponding to a homogeneous Markov process $\XX(\ss)$ subordinated by the Lévy subordinator $\SS^{\phi_t}$ is given by (see (2.3) in \cite{mendoza2016multivariate})
    \begin{equation}
    \label{semig_sub_levy}
        T^{\phi_t}_v f = \int_{\RR^k_+} (T_{\ss} f) \pi_v^{\phi_t}(d\ss),
        \quad f \in \banach,
    \end{equation}
    where $(T_{\ss})_{\ss \in \RR_+^k}$ is the operators semigroup associated to the homogeneous Markov process $\XX(\ss)$.
    The generator $\mathcal{G}^{\phi_t}$ of the subordinated process can be obtained using the generalization of the Phillips theorem for multidimensional Lévy-subordinated processes (see for instance \cite{mendoza2016multivariate}, Theorem 2.2 or \cite{baeumer2008subordinated} for a more general setting):
    \begin{equation} \label{eq:Phillips_subLevymultipar}
        \mathcal{G}^{\phi_t} f = \sum_{j=1}^k c_j(t) \mathcal{G}^{(j)} f 
        + 
        \int_{\RR_+^k \setminus \{\boldsymbol{0}\}}(T_{\ss} f - f) \nu(t, d\ss), \quad f \in \bigcap_{j=1}^k D(\mathcal{G}_j),
    \end{equation}
    where $(\mathcal{G}_j,D(\GGG_j))$, $j = 1,\dots,k$, is the set of generators  corresponding to $(T_{\ss})_{\ss \in \RR^k_+}$.
    For $f \in  \bigcap_{j=1}^k D(\mathcal{G}_j)$, let $\GGG^{\phi_{t-}} := \lim_{s \to t-} \GGG^{\phi_s}$ and $\GGG^{\phi_{t+}} := \lim_{s \to t+} \GGG^{\phi_s}$.
    For every $j=1,\dots,k$, by the assumptions on $c_j(t)$, $c_j(t)\GGG_jf$ is right-continuous with left limits.
    By definition of the finite measure $\nu_F(t,d\ss)$, we have $\nu_F(t,d\ss) = (\lVert \ss \rVert \wedge 1)\nu(t,d\ss)$, therefore
    \begin{equation*}
        \int_{\RR_+^k \setminus \{\{\boldsymbol{0}\}\}}(T_{\ss} f - f) \nu(t,d\ss)
        =
        \int_{\RR_+^k \setminus \{\boldsymbol{0}\}} \frac{T_{\ss} f - f}{\lVert \ss \rVert \wedge 1} \nu_F(t,d\ss).
    \end{equation*}
    The function $\frac{T_{\ss} f - f}{\lVert \ss \rVert \wedge 1}$ is continuous in $\ss$.
    To show that $\frac{T_{\ss} f - f}{\lVert \ss \rVert \wedge 1}$ is bounded, we first show that $\lVert T_{\ss} f - f \rVert \leq \min(2 \lVert f \rVert, \sum_{j=1}^k s_j\left\rVert \mathcal{G}^{(j)} f \right\rVert)$.
   Since $T_{\ss}$ is a contraction semigroup, we have 
    \begin{equation*}
        \lVert T_{\ss} f - f \rVert \leq \lVert T_{\ss} f \rVert + \lVert f \rVert \leq 2 \lVert f \rVert,
    \end{equation*}
    and
    \begin{equation}\label{eq:bound}
         \lVert T_{\ss} f - f \rVert 
        \leq \sum_{j=1}^k s_j\left\rVert \mathcal{G}^{(j)} f \right\rVert.
    \end{equation}
    Equation \eqref{eq:bound} can be proved by induction on $k$, noting that
    \begin{equation}
    \begin{split}
        \lVert T^{(2)}_{s_2} T^{(1)}_{s_1} f - f\rVert 
        &=
        \left\lVert \int_0^{s_2} T^{(2)}_{r_2} \mathcal{G}^{(2)} T^{(1)}_{s_1}f \, dr_2 + \int_0^{s_1} T_{r_1}^{(1)} \mathcal{G}^{(1)} f \, dr_1 \right \rVert
        \\
        &\leq \int_0^{s_2} \lVert \mathcal{G}^{(2)}f\rVert \,dr_2 + \int_0^{s_1} \lVert \mathcal{G}^{(1)}f\rVert \,dr_1
        =
        s_2\lVert \mathcal{G}^{(2)}f\rVert +s_1 \lVert \mathcal{G}^{(1)}f \rVert. 
    \end{split}
    \end{equation}
    Now, if $\lVert \ss \rVert \geq 1$, the function $\frac{T_{\ss} f - f}{\lVert \ss \rVert \wedge 1} = T_{\ss} f - f$ is bounded.
    If $\lVert \ss \rVert < 1$, we have, for every $j = 1,\dots,k$, $s_j \leq \max_{j}(s_j) \leq \lVert \ss \rVert < 1$. 
    Then,
    \begin{equation*}
        \frac{\lVert T_{\ss} f - f \rVert}{\lVert \ss \rVert \wedge 1}
        = 
        \frac{\lVert T_{\ss} f - f \rVert}{\lVert \ss \rVert} 
        \leq
        \sum_{j=1}^k\frac{s_j }{\lVert \ss \rVert} \left\lVert \mathcal{G}^{(j)} f \right\rVert
        \leq
        \sum_{j=1}^k \left\lVert \mathcal{G}^{(j)} f \right\rVert < \infty.
    \end{equation*}
    By Theorem 2 in \cite{nielsen2011weak}, since $\nu_F(t,\cdot)$ weakly converges by assumption, the integral
    \begin{equation*}
        \int_{\RR_+^k \setminus \{\boldsymbol{0}\}}(T_{\ss} f - f) \,\nu(t,d\ss)
    \end{equation*}
    is right continuous and with left limit.
    Therefore, $\mathcal{G}^{\phi_t} f$ in \eqref{eq:Phillips_subLevymultipar} is right-continuous with finite left limits, and there are only a finite number of discontinuity points.
    
    Consider the partition $\Pi = \{\tau_0,\tau_1,\dots,\tau_n\}$ of the interval $[t_1,t_2]$ for any $t_1 < t_2 \in \RR_+$, with $t_1=\tau_0 < \tau_1 < \dots < \tau_n = t_2$.
    Let $|\Pi| := \max_{h=0,\dots,n-1} (\tau_{h+1} - \tau_h)$ and let
    \begin{equation*}
        R_{t_1,t_2}^{\Pi} f := \prod_{h=0}^{n-1} T^{\phi_{\tau_{h}}}_{\tau_{h+1} - \tau_{h}} f, \quad f \in \banach
    \end{equation*}
    and
    \begin{equation} \label{eq:Convolution}
        p_{t_1,t_2}^{\Pi} := \pi_{\tau_1-\tau_0}^{\phi_{\tau_0}} * \pi_{\tau_2-\tau_1}^{\phi_{\tau_1}} * \dots * \pi_{\tau_n-\tau_{n-1}}^{\phi_{\tau_{n-1}}},
    \end{equation}
    where $*$ denotes the convolution product.
    Using expression \eqref{semig_sub_levy}, by Fubini's theorem and by the Markov property of $(T_{\ss})_{\ss \in \RR_+^k}$, we have that $T_u^{\phi_{t_1}} T_v^{\phi_{t_2}}f = T_v^{\phi_{t_2}} T_u^{\phi_{t_1}}f$, for $f \in \banach$.
    This commutative property and the assumptions on $c_j$ and $\nu_F$ imply that the conditions of Theorem 3.1 in \cite{goldstein1969abstract} are verified.
    A first consequence is that the limit of $R^{\Pi}_{t_1,t_2} f$ as $|\Pi| \to 0$ exists for $f \in \banach$, and we denote it by $(U_{t_1,t_2})_{0 \leq t_1 \leq t_2 < \infty}$.
    Moreover, it is a strongly continuous contraction propagator on $\banach$ and the family of generators of $(U_{t_1,t_2})_{0 \leq t_1 \leq t_2 < \infty}$ is given by
    \begin{equation} \label{eq:U_RightLimit}
        \lim_{h \to 0+} \frac{1}{h}(U_{t,t+h}f - f) = \mathcal{G}^{\phi_t} f,
    \end{equation}
    for $f \in \cap_{j=1}^k D(\mathcal{G}^{(j)})$.
    We have to prove that $U_{t_1,t_2} = T_{t_1,t_2}$ on $\banach$ for every $0 \leq s \leq t < \infty$.
    For a partition $\Pi$ of $[t_1,t_2]$, from \eqref{semig_sub_levy} and \eqref{eq:Convolution}, we have:
    \begin{equation*}
        R_{t_1,t_2}^{\Pi}f = \prod_{h=0}^{n-1} T^{\phi_{\tau_h}}_{\tau_{h+1} - t_{h}} f = \int_{\RR^k_+} T_{\ss} f p_{t_1,t_2}^{\Pi}(d\ss).
    \end{equation*}
    Since $p_{t_1,t_2}^{\Pi}$ is the convolution of laws of Lévy subordinators, the Laplace transform of $p_{t_1,t_2}^{\Pi}$ is
    \begin{equation} \label{eq:LaplaceTransfPartition}
        \int_{\RR^k_+ \setminus \{\boldsymbol{0}\}} 
        e^{- \boldsymbol{\lambda} \cdot \uu} p_{t_1,t_2}^{\Pi}(d\uu)
        =
        e^{- \sum_{h=0}^{n-1} \psi(\boldsymbol{\lambda}, \tau_h)(\tau_{h+1}-\tau_h)},
    \end{equation}
    where
    \begin{equation*}
        \psi(\boldsymbol{\lambda},t_h) = \sum_{j=1}^k \lambda_j c_j(t_h) 
        +
        \int_{\RR^k_+ \setminus \{\boldsymbol{0}\}}(1-e^{-\boldsymbol{\lambda} \cdot \boldsymbol{\tau}}) \nu(t_h, d \boldsymbol{\tau}),
        \quad
        \boldsymbol{\tau} \in \RR^k_+
    \end{equation*}
    is the density of the Laplace exponent of $S^{\phi_{t_h}}$.
    By hypothesis, $\psi(\boldsymbol{\lambda},t)$ is piecewise continuous in $t$.
    Therefore, as $|\Pi| \to 0$, \eqref{eq:LaplaceTransfPartition} converges to the Laplace transform of the transition probability $\pi_{s,t}$ of the additive subordinator and $p_{s,t}^{\Pi}$ weakly converges to $\pi_{s,t}$.
    For any continuous linear functional $l$ on $\banach$ and any $f \in \banach$, as $|\Pi| \to 0$,
    \begin{equation*}
        l(R_{s,t}^{\Pi} f) = 
        \int_{\RR^k_+ \setminus \{\boldsymbol{0}\}} l(T_{\rr} f) p_{s,t}^{\Pi}(d\rr)
        \longrightarrow
        \int_{\RR^k_+ \setminus \{\boldsymbol{0}\}} l(T_{\rr} f) \pi_{s,t}(d\rr)
        = l(T_{s,t} f),
    \end{equation*}
    since $l(T_{\rr} f)$ is a continuous and bounded function in $\rr$.
    From \eqref{eq:U_RightLimit} we have that
    \begin{equation*} 
        \lim_{h \to 0+} \frac{1}{h}(T_{s,t}f - f) = \mathcal{G}^{\phi_t} f, \quad f \in \bigcap_{j=1}^k D(\mathcal{G}^{(j)}).
    \end{equation*}
    It follows that $D(\mathcal{G}^{(j)}) \subseteq D(\mathcal{G}_t)$  and from \eqref{eq:Phillips_subLevymultipar} we have \eqref{eq:generator} that concludes the proof.
\end{proof}
Theorem \ref{Li} proves that $T_{t_1,t_2}$ are a strongly continuous  Markov evolution and then they are  a Feller evolution system on $\banach$.

\begin{remark}
By Theorem \ref{Li} above and Theorem 3.2 in \cite{bottcher2014feller}, there is a time homogeneous transformation $\widetilde{\YY}(t)$ of the Markov process $\YY(t)$ that is a Feller process.
\end{remark}

Let $\XX(\ss)$ be a multiparameter Markov process,
and let $\SS(t)$ be a $k$-dimensional additive subordinator independent of $\XX(\ss)$. 
The subordinated process $\YY(t)=\XX(\SS(t))$ is a time-inhomogeneous Markov process with Markov evolution given by \eqref{eq:splitT}.
By Proposition~\ref{prop:sum} there exist independent Markov processes $\XX^{(j)}(t)$, such that
\begin{equation}\label{eq:multipM}
   \XX(\ss)=_{\mathcal{L}}\sum_{j=1}^k\XX^{(j)}(s_j),
\end{equation} 
and it follows that
\begin{equation}\label{eq:subP2}
    \YY(t)=_{\mathcal{L}}\sum_{j=1}^k\XX^{(j)}(S_j(t)).
\end{equation}
\begin{corollary} \label{cor:generatorind}
    If the multivariate additive subordinator $\SS(t)$ has independent components, then the generator of $\YY(t)$ in \eqref{eq:asp1} reads
    \begin{equation}\label{eq:generatorind}
        \mathcal{G}_t f
        =
        \sum_{j=1}^k {c}_j(t) \mathcal{G}^{(j)} f
        +
        \sum_{j=1}^k\int_{(0,\infty)}(T^{(j)}_{s_j}f-f) \nu_{j}(t,ds_j), 
        \quad 
        f
        \in \bigcap_{j=1}^kD(\mathcal{G}^{(j)}),
    \end{equation}
where $\nu_j(t,ds_j)$, for $j \in \{1,\dots,k\}$, are the time-dependent Lévy measures of the marginal subordinators $S_j(t)$. 
\end{corollary}
\begin{proof}
The proof follows from \eqref{eq:splitT2} and from adapting  Proposition 5.3 in \cite{cont2003financial}.
  \end{proof}
Theorem \ref{teo:sub} contains as a special case the result in \cite{li2016additive}, therefore if the multiparameter semigroup $(T_{\ss})_{\ss\succeq \boldsymbol{0}}$ is associated to a L\'evy process, the process $\YY(t)$ associated to the semigroup $(T_{t_1,t_2})_{t_1<t_2}$ is additive.


We consider the characteristic function $\hat{\mu}_{t_1, t_2}(\xx, \boldsymbol{\xi})$ of the increment of the process $\YY(t_2)-\YY(t_1)$ defined by
\begin{equation}
    \hat{\mu}^{\YY}_{t_1,t_2}(\xx, \boldsymbol{\xi}) 
    = 
    \mathbb{E}[e^{i \xxi \cdot (\YY(t_2)-\YY(t_1))} \vert \YY(t_1) = \xx].
\end{equation}
\begin{proposition} \label{prop:char}
    Let $\YY(t)$ be process in \eqref{eq:asp1}, the characteristic function $\hat{\mu}_{t_1,t_2}(\xx, \boldsymbol{\xi})$ of an increment $\YY(t_2)-\YY(t_1)$ is given by
       \begin{equation}\label{eq:chinc}
    \hat{\mu}^{\YY}_{t_1,t_2}(\xx, \boldsymbol{\xi})
    =
    \int_{\RR^k_+} \hat{\mu}_{\ss}(\xx, \boldsymbol{\xi}) \pi_{t_1,t_2}(d\ss),
    \end{equation}
    where $\hat{\mu}_{\ss}(\xx, \boldsymbol{\xi})$ is the characteristic function of $\XX(\ss)-\xx$.
\end{proposition}
\begin{proof}
    The following holds
    \begin{equation*}
    \begin{split}
        &\hat{\mu}^{\YY}_{t_1,t_2}(\xxi) 
        = 
        \mathbb{E}[e^{i \xxi \cdot (\XX(\SS(t_2)) - \XX(\SS(t_1)) )} \vert \XX(\SS(t_1)) = \xx]
        \\
        &= 
        \int_{\RR^k_+} \int_{\RR^k_+} \mathbb{E}[e^{i \xxi \cdot (\XX(\ss_1+\hh) - \XX(\ss_1)) )} \vert \XX(\ss_1) = \xx, \SS(t_1)=\ss_1,\SS(t_2) - \SS(t_1)=\hh] \pi_{0,t_1}(d \ss_1) \pi_{t_1,t_2}(d\hh)
        \\
        &= 
        \int_{\RR^k_+} \mathbb{E}_{\xx}[e^{i \xxi \cdot (\XX(\hh)-\xx)}] \pi_{t_1,t_2}(d\hh)
        = 
        \int_{\RR^{k}_+} \hat{\mu}_{\ss}(\xx, \xxi) \pi_{t_1,t_2}(d\ss).
    \end{split}
    \end{equation*}
    \end{proof}

\begin{remark}
Corollary \ref{cor:indep} and \eqref{eq:asp1} lead to the following fact, which is essential for constructing multivariate additive subordinated processes able to model independence.
The subordinated process $\YY(t)$ in \eqref{eq:asp1} has independent components if and only if $k=d$, $\XX(\ss)$ is a cw-multiparameter process and $\SS(t)=(S_1(t),\ldots, S_d(t))$ has independent marginal processes.
    \end{remark}
Since $\XX^{(j)}(s_j)$ are Markov processes, the operator semigroups $T^{(j)}_{s_j}$ restricted to $\mathcal{S}(\RR^d)$ are pseudo-differential operators as in \eqref{eq:ChTi}, specifically
\begin{equation} \label{eq:PDO_SemigroupMarginalMarkov}
    T^{(j)}_{s_j} f(\xx) 
    =
    (2\pi)^{-d/2}\int_{\RR^d}e^{i\xx \cdot \boldsymbol{\xi}}\hat{\mu}^{(j)}_{s_j}(\xx, \boldsymbol{\xi})\hat{f}(\boldsymbol\xi)d\boldsymbol\xi,
\end{equation}
and its
 generator $\mathcal{G}^{(j)}$  has the representation
\begin{equation} \label{eq:PDO_GeneratorMarginalMarkov}
    \mathcal{G}^{(j)}f(\xx)
    =
    -(2\pi)^{-d/2} \int_{\RR^d} e^{i\xx \cdot \boldsymbol{\xi}} q_j(\xx, \boldsymbol{\xi}) \hat{f}(\boldsymbol\xi) d\boldsymbol\xi.
\end{equation}
The following theorem extends Theorem 1.2 in \cite{jacob1998characteristic} for Feller processes to the class of non homogeneous Feller evolutions.

\begin{theorem}\label{thm:symbol}
    The right generator  $\mathcal{G}_t$ in Equation \eqref{eq:generator} has representation
    \begin{equation} \label{eq:SymbolRightGenerator}
        \mathcal{G}_t f(\xx) 
        =
        -(2\pi)^{-d/2} \int_{\RR^d} e^{i\xx \cdot \boldsymbol{\xi}} q(t,\xx, \boldsymbol{\xi}) \hat{f}(\boldsymbol\xi) d\boldsymbol\xi,
    \end{equation}
    with symbol
    \begin{equation}\label{eq:simbol}
    \begin{split}
        q(t, \xx, \xxi)
        =
        \left. -\frac{d}{dt}\hat{\mu}^{\YY}_{s,t}(\xx,\xxi) \right\vert_{s=t},
    \end{split}
    \end{equation}  
    that is continuous and negatively definite.
\end{theorem}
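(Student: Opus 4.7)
My plan is to derive the pseudo-differential representation directly from the definition of the right generator, then identify the symbol from the limit of difference quotients of the characteristic function of the increment, and finally use the structure of characteristic functions to deduce continuity and negative definiteness.

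First I would extend the pseudo-differential representation \eqref{eq:ChTi} to the evolution setting. Since each $T_{t_1,t_2}$ is an integral operator with kernel given by the transition measure $p_{t_1,t_2}(\xx, d\yy)$ from Theorem \ref{teo:sub}, applying the Fourier inversion formula to $f\in\mathcal{S}(\RR^d)$ and using the definition of the increment characteristic function $\hat{\mu}^{\YY}_{t_1,t_2}(\xx,\xxi)$ yields
\begin{equation*}
T_{t_1,t_2} f(\xx) = (2\pi)^{-d/2} \int_{\RR^d} e^{i\xx\cdot\xxi}\, \hat{\mu}^{\YY}_{t_1,t_2}(\xx,\xxi)\, \hat{f}(\xxi)\, d\xxi.
\end{equation*}
Subtracting $f(\xx) = (2\pi)^{-d/2}\int_{\RR^d} e^{i\xx\cdot\xxi}\hat{f}(\xxi)\,d\xxi$ and dividing by $h>0$ gives the difference quotient
\begin{equation*}
\frac{T_{t,t+h}f(\xx) - f(\xx)}{h} = -(2\pi)^{-d/2}\int_{\RR^d} e^{i\xx\cdot\xxi}\, \frac{1-\hat{\mu}^{\YY}_{t,t+h}(\xx,\xxi)}{h}\, \hat{f}(\xxi)\, d\xxi.
\end{equation*}

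Next I would pass to the limit as $h\to 0^+$. Pointwise, since $\hat{\mu}^{\YY}_{t,t}(\xx,\xxi)=1$, we have $h^{-1}(1-\hat{\mu}^{\YY}_{t,t+h}(\xx,\xxi)) \to q(t,\xx,\xxi)$ by definition \eqref{eq:simbol}. To bring the limit inside the integral I would invoke dominated convergence: $\hat{f}$ lies in $\mathcal{S}(\RR^d)$ so it decays rapidly, while the difference quotient can be bounded uniformly for small $h$ by combining the bound $\lVert T_{\ss} f - f\rVert \leq \sum_j s_j\lVert \mathcal{G}^{(j)} f\rVert$ proved in Theorem \ref{Li} with the representation of $T_{t,t+h}$ as a mixture against $\pi_{t,t+h}$; for $f$ in the intersection of generator domains this yields control by $\hat{f}$ times a polynomial factor absorbed by Schwartz decay. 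This identifies the symbol as in \eqref{eq:SymbolRightGenerator}, matching Theorem \ref{Li} with \eqref{eq:generator}.

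For continuity and negative definiteness, I would exploit Bochner's theorem: for each fixed $\xx$ and $h>0$, $\xxi\mapsto \hat{\mu}^{\YY}_{t,t+h}(\xx,\xxi)$ is the characteristic function of $\YY(t+h)-\YY(t)$ under $\mathbb{P}_\xx$, hence continuous and positive definite with value $1$ at $\xxi=\boldsymbol{0}$. Consequently $\xxi\mapsto h^{-1}(1-\hat{\mu}^{\YY}_{t,t+h}(\xx,\xxi))$ is continuous and negative definite in the sense of Schoenberg (the class is closed under nonnegative scalar multiples and under subtraction from a constant of a positive definite function vanishing at zero after normalization). Pointwise limits of continuous negative definite functions remain negative definite by the elementary closure of the defining quadratic form inequalities; continuity of the limit would follow by verifying local uniform convergence of the difference quotients, which again reduces to the Schwartz-decay argument above applied at the level of test functions, or alternatively from the Lévy-Khintchine-type structure inherited via Theorem \ref{Li}.

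The main obstacle will be the justification of the interchange of limit and integral, as the difference quotient $h^{-1}(1-\hat{\mu}^{\YY}_{t,t+h}(\xx,\xxi))$ need not be uniformly bounded in $\xxi$, so the integrability must be controlled through $\hat{f}$. The cleanest route is to restrict first to $f\in\bigcap_{j=1}^k D(\mathcal{G}^{(j)})\cap \mathcal{S}(\RR^d)$, where Theorem \ref{Li} already guarantees norm convergence of the difference quotient, and then identify both sides as pseudo-differential operators on the Schwartz space.
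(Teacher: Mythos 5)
Your proposal is correct and follows essentially the same route as the paper: represent $T_{t_1,t_2}$ as a pseudo-differential operator with multiplier $\hat{\mu}^{\YY}_{t_1,t_2}(\xx,\xxi)$, pass the difference quotient through the integral by dominated convergence, identify the limit with $-q(t,\xx,\xxi)$ via $\hat{\mu}^{\YY}_{t,t}=1$ and the multiplicative evolution property, and obtain continuity and negative definiteness from Bochner's theorem. Your explicit attention to the domination needed for the interchange of limit and integral is in fact more careful than the paper's one-line bound $|[\hat{\mu}^{\YY}_{t,t+h}-1]\hat{f}|\leq 2|\hat{f}|$, which controls the numerator but not the quotient.
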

\begin{proof}
   From Theorem \ref{teo:sub} we have
\begin{equation}
T_{t_1, t_2} f(\xx) = \int_{\RR^k_+} T_{\ss}f(\xx) \, \pi_{t_1,t_2}(d\ss),
\end{equation}
From Theorem 1.1 in \cite{jacob1998characteristic} adapted to a vector time we have 
\begin{equation}\label{ChTi}
    T_{\ss} f(\xx) = (2\pi)^{-d/2} \int_{\RR^d}e^{i \xx \cdot \boldsymbol{\xi}} \hat{\mu}_{\ss}(\xx, \boldsymbol{\xi})\hat{f}(\boldsymbol\xi)d\boldsymbol\xi,
\end{equation}
By Fubini-Tonelli theorem, we get
\begin{equation}\label{eq:ChT12}
\begin{split}
    T_{t_1, t_2} f(\xx)
    &=
    \int_{\RR^k_+}(2\pi)^{-d/2}
    \int_{ \RR^d} e^{i\xx \cdot \boldsymbol{\xi}} \hat{\mu}_{\ss} (\xx, \boldsymbol{\xi}) \hat{f}(\boldsymbol\xi) d\boldsymbol\xi \pi_{t_1,t_2}(d\ss)
    \\
    &=
    (2\pi)^{-d/2} \int_{ \RR^d} e^{i\xx \cdot \boldsymbol{\xi}} \bigg[
    \int_{\RR^k_+}\hat{\mu}_{\ss}(\xx, \boldsymbol{\xi}) \pi_{t_1,t_2}(d\ss)
    \bigg]
    \hat{f}(\boldsymbol\xi) d\boldsymbol\xi,
    \end{split}
\end{equation}
where $\hat{f}(\boldsymbol\xi)$ is the Fourier transform of $f$.
Substituting  Equation \eqref{eq:chinc}  in the above \eqref{eq:ChT12} we have
\begin{equation*}
\begin{split}
    T_{t_1, t_2} f(\xx)
    &=
    (2\pi)^{-d/2} \int_{ \RR^d} e^{i\xx \cdot \boldsymbol{\xi}} \hat{\mu}^{\YY}_{t_1,t_2}(\xx, \boldsymbol{\xi}) \hat{f}(\boldsymbol\xi) d\boldsymbol\xi,
    \end{split}
\end{equation*}
Since 
\[
|e^{i\xx \cdot \boldsymbol{\xi}}[\hat{\mu}^{\YY}_{t,t+h}(\xx, \boldsymbol{\xi})-1] \hat{f}(\boldsymbol\xi)|\leq |2\hat{f}(\boldsymbol\xi)|\]
and $|2\hat{f}(\boldsymbol\xi)|\in \mathcal{L}^1(\RR^d)$ by the dominated convergence theorem 
we have 
\begin{equation}
\begin{split}
    \mathcal{G}_t f(\xx)&=\lim_{h\rightarrow0}h^{-1}[T_{t, t+h}f(\xx) - f(\xx)]
    \\
    &= \lim_{h\rightarrow0} h^{-1}(2\pi)^{-d/2}\int_{ \RR^d}e^{i\xx \cdot \boldsymbol{\xi}}[\hat{\mu}^{\YY}_{t,t+h}(\xx, \boldsymbol{\xi})-1] \hat{f}(\boldsymbol\xi)d\boldsymbol\xi
    \\
    &= (2\pi)^{-d/2}\int_{ \RR^d} e^{i\xx \cdot \boldsymbol{\xi}} \lim_{h\rightarrow0} h^{-1}[\hat{\mu}^{\YY}_{t,t+h}(\xx, \boldsymbol{\xi})-1] \hat{f}(\boldsymbol\xi)d\boldsymbol\xi.
    \end{split}
\end{equation}
We have to prove that 
\begin{equation}\label{eq:deriv}
    \lim_{h\rightarrow0}h^{-1}[\hat{\mu}^{\YY}_{t,t+h}(\xx, \boldsymbol{\xi})-1]
    =
    \left. \frac{d}{dt}\hat{\mu}^{\YY}_{s,t}(\xx, \boldsymbol{\xi}) \right\vert_{s=t}.
\end{equation}
Since $\hat{\mu}^{\YY}_{t,t}(\xx, \boldsymbol{\xi})=1$ we have 
\begin{equation} \label{eq:derivataMuHat}
\begin{split}
    \frac{d}{dt}\hat{\mu}^{\YY}_{s,t}(\xx, \boldsymbol{\xi})&=\lim_{h\rightarrow0}h^{-1}[\hat{\mu}^{\YY}_{s,t+h}(\xx, \boldsymbol{\xi})-\hat{\mu}^{\YY}_{s,t}(\xx, \boldsymbol{\xi})]\\
    &=\lim_{h\rightarrow0}h^{-1}[\hat{\mu}^{\YY}_{s,t}(\xx, \boldsymbol{\xi})\hat{\mu}^{\YY}_{t,t+h}(\xx, \boldsymbol{\xi})-\hat{\mu}^{\YY}_{s,t}(\xx, \boldsymbol{\xi})\hat{\mu}^{\YY}_{t,t}(\xx, \boldsymbol{\xi})]\\
    &=\hat{\mu}^{\YY}_{s,t}(\xx, \boldsymbol{\xi})\lim_{h\rightarrow0}h^{-1}[\hat{\mu}^{\YY}_{t,t+h}(\xx, \boldsymbol{\xi})-1]
    \end{split}
\end{equation}
and Equation \eqref{eq:deriv} follows.

By Bochner's Theorem, $\hat{\mu}^{\YY}_{s,t}(\xx,\boldsymbol\xi)$ is continuous and positive definite since it is the Fourier transform of a probability measure. 
Therefore, $q(t, \xx, \xxi)$ in \eqref{eq:simbol} is continuous and negative definite (see Lemma 1.1 \cite{jacob2001levy}) and it is the symbol of the generator $\mathcal{G}_t$. 
\end{proof}

We call $q(t, \xx, \xxi)$ in \eqref{eq:simbol} the symbol of the process $\YY(t)$ in \eqref{eq:asp1}.
From \eqref{eq:derivataMuHat} in the proof of Theorem \ref{thm:symbol}, we have the following equation that links the symbol of the process to the characteristic function of the increments of the process $\YY(t)$:
\begin{equation}
    \frac{d}{dt}\hat{\mu}^{\YY}_{s,t}(\xx,\xxi) = -\hat{\mu}^{\YY}_{s,t} (\xx,\xxi)q(t, \xx, \xxi).
\end{equation}
Since the symbol $q$  is continuously negative definite, by Proposition 1.2 in \cite{schnurr2009symbol}, it is locally polynomially bounded 
i.e. for any $t\in \RR_+$
\begin{equation}
|q(t,\xx, \xxi)|\leq c(1+|\xxi|)^2, \,\,\, \xxi\in \RR^d.
    \end{equation}
Therefore, if the symbol continuously depends on time, that is if $\gamma(t)$ and $\nu_F(t, d\ss) $ in Theorem \ref{Li} are continuous, by Lemma 2.2 in \cite{bottcher2014feller}, we have $\mathcal{G}^-_t=\mathcal{G}_t$ and
\begin{equation}
    \frac{d}{ds}\hat{\mu}^{\YY}_{s,t}(\xx,\xxi) = -q(t, \xx, \xxi)\hat{\mu}^{\YY}_{s,t}(\xx,\xxi).
\end{equation}
Since $\xxi\rightarrow q(t,\xx,\xxi)$ is continuous and negative definite it enjoys the L\'evy-Khintchine representation:
\begin{equation}
\begin{split}
    q(t,\xx, \boldsymbol{\xi})
    &=
    i\boldsymbol{\gamma}(t,\xx) \cdot \boldsymbol{\xi} - \frac{1}{2}\boldsymbol{\xi}  \cdot {\Sigma(t,\xx)}\boldsymbol{\xi} +\int_{\RR^d}(e^{i\xxi \cdot \yy}-1-i\xxi \cdot \yy \mathbbm{1}_{B}(\yy)){\nu}_{\YY}(t,\xx,d\yy),
\end{split}
\end{equation}
where $B$ is the unit ball in $\RR^d$ and ${\nu}_{\YY}(t,\xx,d\yy)$ is a time and space dependent L\'evy measure. 
In the following sections, we provide a more explicit expression for the symbol in the Ornstein-Uhlenbeck case.
In Proposition \ref{Prop:symbLev} we present the relevant case of a time-changed L\'evy process for which the symbol can be derived analytically.
\begin{proposition}\label{Prop:symbLev}
    Let $\XX_j(s_j)$ for $j \in \{1,\ldots, k\}$, be independent L\'evy processes. The process $\XX(\ss) = \sum_{j=1}^k \XX_j(s_j)$  is a multiparameter L\'evy process. 
    Let $\SS(t)$ be an additive subordinator with triplet $(0,0, \nu(t, \cdot))$ and $\nu_F$ be continuous in $t$, the subordinated  process $\YY(t)=\XX(\SS(t))$ is a time-inhomogeneous additive process. 
    Let $\qq(\xxi) \coloneqq (q_1(\xxi), \ldots, q_k(\xxi))$, where $q_j(\xxi)$ is the symbol of $\XX_j$.
    The Markov evolution $(T_{t_1,t_2})_{t_1<t_2}$ associated to the subordinated process $\YY(t)$ in \eqref{eq:asp1} has a unique generator $\mathcal{G}_t$ with symbol:
    \begin{equation}\label{eq:symb}
        q(t, \xxi) = - \frac{d}{dt}\Psi_t^S(\qq(\xxi)),
    \end{equation}
    where for any $\ww = (w_1,\dots, w_k) \in \mathbb{C}^k$ with $Re(w_j)\leq 0$, for $j \in \{1,\dots, k\}$, we have
    \begin{equation*}
        \Psi_{t}^S(\ww)
        =
        \boldsymbol{c}(t) \cdot \ww + 
        \int_{\mathbb{R}^k_+} (e^{\ww \cdot \ss}-1) \nu(t,d\ss),
    \end{equation*}
    where $\boldsymbol{c}(t)\in \RR^k_+$ and $\nu(t, d\ss)$ is the time dependent L\'evy measure of $\SS(t)$. 
\end{proposition}
\begin{proof}
    Since $\nu_F$ is continuous and $c_j(t)=0$ for every $j \in \{1,\dots,k\}$, the generator is unique by Theorem \ref{Li}.
    Let $\hat{\mu}^{\YY}_{t}(\boldsymbol{\xi})$ be the characteristic function of $\YY(t)$. Adapting the steps of Theorem 4.7 in \cite{ba} to take into account time inhomogeneity of the  additive  subordinator, we have
    \begin{equation}\label{eq:chinc2}
        \hat{\mu}_{t}(\boldsymbol{\xi})=\exp\{\Psi_t^S(\log\hat{\mmu}(\xxi))\},
    \end{equation}
where $\log\hat{\mmu}(\xxi):=(\log\hat{\mu}^{(1)}(\xxi), \ldots, \log\hat{\mu}^{(k)}(\xxi))$ and $\hat{\mu}^{(j)}(\xxi)$ is the time one characteristic function of $\XX^{(j)}(s_j)$. Since $\XX^{(j)}(\ss_j)$ are L\'evy processes we have $\log\hat{\mu}^{(j)}(\xxi)=q^{(j)}(\xxi)$ and 
 \begin{equation}\label{eq:chinc3}
    \hat{\mu}_{t}(\boldsymbol{\xi})=\exp\{\Psi_t^S(\qq(\xxi))\}.
\end{equation}
Then, since $\YY(t)$ has independent increments, 
\begin{eqnarray}
    \hat{\mu}^{\YY}_{s, t}(\boldsymbol{\xi})&=\frac{\hat{\mu}^{\YY}_{t}(\boldsymbol{\xi})}{\hat{\mu}^{\YY}_{s}(\boldsymbol{\xi})}=\exp\{\Psi_t^S(\qq(\xxi))-\Psi_s^S(\qq(\xxi)))\}.
\end{eqnarray}
We can now compute the symbol as follows
\begin{equation}
\begin{split}
    q(t, \xxi)
    &=
    - \frac{d}{dt} \hat{\mu}^{\YY}_{s, t}(\boldsymbol{\xi}) \bigg|_{t=s}
    =
    -\bigg(\frac{d}{dt} \psi_t^S \big (\qq(\xxi) \big) \bigg)
    \exp\bigg\{ \Psi_t^S \big(\qq(\xxi) \big) - \Psi_s^S \big(\qq(\xxi) \big) \bigg\} \bigg\vert_{t=s}
    \\
    &= -\frac{d}{dt} \Psi_t^S\big(\qq(\xxi)\big),
    \end{split}
   \end{equation}
which concludes the proof.
\end{proof}
\setcounter{example}{1}
\begin{example}
{\bf{Continued}} 
    One example of multiparameter Markov process is the multiparameter Brownian motion constructed in \cite{jevtic2017note} as follows.
    Let ${\boldsymbol{B}}^{(j)}(t)$ be independent Brownian motions on $\mathbb{R}^{n_{j}}$ with drift $\boldsymbol{\mu}$ and covariance matrix $\boldsymbol{\Sigma}_{j}$.
    Let $\boldsymbol{A}_{j}$ be a $d \times {n_{j}}$ real matrix.
    We can define the process $\boldsymbol{B}_{\boldsymbol{A}} = \{ \boldsymbol{B}_{\boldsymbol{A}} (\boldsymbol{s}),\boldsymbol{s}\in\mathbb{R}^{d}_{+}\}$ as%
    \begin{equation}
        \label{Brho1}
        {\boldsymbol{B}}_{\boldsymbol{A}}(\boldsymbol{s})=\boldsymbol{A}_{1}{{\boldsymbol{B}^{(1)}}%
        }({s}_{1})+\ldots+\boldsymbol{A}_{d}{{\boldsymbol{B}^{(d)}}}({s}_{d}) \,\,\, \boldsymbol{s}%
        \in\mathbb{R}^{d}_{+}.
    \end{equation}
    The process $\boldsymbol{B}_{\boldsymbol{A}}$ is a multiparameter Lévy process on $\mathbb{R}^{n}$, see Example 4.4 in
    \cite{ba}.
    Consider a process $\boldsymbol{Y}$ defined by
    \begin{align} \label{GMPP}
    \boldsymbol{Y}(t):=\boldsymbol{B}_{\boldsymbol{A}}(\boldsymbol{\SS}(t)),%
    \end{align}
    where $\BB_{\AA}(\ss)$ is the multiparameter process in \eqref{Brho1} and $\SS(t)$ is a multivariate additive subordinator independent of $\BB_{\boldsymbol{A}}(\ss)$.
    The process $\YY$ is a particular case of an additive subordinated multiparameter Markov process. 
    Specifically, it is an additive process, with symbol specified later in \eqref{eq:symbAdd}, where $\qq^{(j)}(\xxi) = \log\hat{\mu}_{\AA_j\BB^{(j)}(1)}(\xxi)$ is the symbol of the multivariate Brownian motion $\AA_j\BB^{(j)}(s_j)$.
\end{example}
  
\section{Additive subordination of M-OU processes} \label{sec:M-OUprocess}
\subsection{Multiparameter Ornstein-Uhlenbeck process}

Let $\boldsymbol{W}(t)$ be a $d$-dimensional standard Brownian motion on $\mathbb{R}^{d}$. 
Let $\boldsymbol{K}$ be a real $d\times d$ matrix such that its eigenvalues have positive real parts, $\boldsymbol{\theta}\in\mathbb{R}^{d}$, and let $\La$ be a positive definite $d\times d$ matrix. 
Consider the following $d$-dimensional Ornstein-Uhlenbeck (OU) process solution of 
\begin{equation}
d\boldsymbol{X}(t)=-\boldsymbol{K}\left(  \boldsymbol{X}(t)-\boldsymbol{\theta
}\right)  dt+\La d\boldsymbol{W}(t) , \label{OU}%
\end{equation}
with $\boldsymbol{X}(0)=\xx$. In \eqref{OU}, $\boldsymbol{K}$ is a matrix controlling the mean-reversion rate, $\boldsymbol\theta$ contributes to the non-zero drift (or long-run level), and $\La$, such that $\La\La^{\top}=\Si$, often goes under the name of volatility.

Let $u_{t}(\xx, \yy)$ be the transition density of the OU process $\XX$, therefore $\mu_{t}(\xx,d\yy) = u_{t}(\xx, \yy)d\yy$. 
Then, the OU semigroup $(T_t)_{t\geq 0}$ defined on the space $\mathcal B_b(\mathbb R^d)$ is such that $T_{t} f(\xx)=\int_{\mathbb R^d}f(\yy)u_{t}(\xx, \yy)d\yy$, where the OU transition kernel has a multivariate normal distribution with the following mean and covariance functions
%



\[
\boldsymbol{\theta}_{t} = e^{-\KK t} \xx + \left( \boldsymbol{I} - e^{-\KK t} \right) \boldsymbol{\theta}, \quad
{\Si}_{t} = \int_0^{t } e^{-\KK u} \boldsymbol{\Sigma} e^{-\KK^\top u} \, du,
\]
where $\boldsymbol{I}$ is the identity matrix. 
%
The semigroup $(T_t)_{t\geq 0}$ defined above is a strongly continuous contraction semigroup on $\mathcal B_b(\mathbb R^d)$, while, restricted to the space $\mathcal{C}_0(\RR^d)$, of continuous functions vanishing at infinity, it is a Feller semigroup (for the 1-dimensional case see for instance \cite{li2014time}).
The generator of the $d$-dimensional OU process is
\begin{equation}
    \mathcal Gf(\xx)=-\nabla f(\xx) \cdot \KK(\xx-\boldsymbol{\theta})+\frac{1}{2}\mathrm{Tr}\left(\La\La^{\top}\,\nabla^2f(\xx)\right)
\end{equation}
with symbol
\begin{equation}
    q(\xx,\boldsymbol{\xi}) = i\boldsymbol{\xi} \cdot \KK(\xx-\boldsymbol{\theta})+\frac{1}{2}\boldsymbol{\xi} \cdot \Si\boldsymbol{\xi}.
\end{equation}

We want to endow the process with a multiparametric nature in the spirit of Equation \eqref{eq:asp1}, according to the following definition.
A multiparameter product Markov OU (MP-OU) process ${\XX}(\ss;k)$ is a $\RR^{dk}$-valued stochastic process defined by
\begin{equation} \label{eq:MultiOUcomp}
    {\XX}(\ss;k)=( \XX_1(s_1), \ldots, \XX_k(s_k)),
\end{equation}
where $\XX_j(s_j)$ are independent $d$-dimensional OU processes satisfying
\begin{equation}
    d\boldsymbol{X}_j(s_j)=-\boldsymbol{K}_j
    \left( \boldsymbol{X}_j (s_j) - \boldsymbol{\theta}_j \right) dt + {\boldsymbol{\Lambda}_j} d\boldsymbol{W}_j(s_j),  
    \quad
    \XX_j(0)=0, 
    \quad 
    j \in \{1, \dots, k\}.\label{terms_OU}%
\end{equation}
Notice that the OU processes in \eqref{terms_OU} have independent driving Brownian motions $\WW_j(s_j)$. 
Moreover, the assumptions $\XX_j(0)= \boldsymbol{0}$, for $j \in \{1, \dots, k\}$, imply $\XX(\boldsymbol{0};k) = \boldsymbol{0}$. 

Let $\XX(\ss;k)$ be an MP-OU process and $\SS(t)$ an independent additive subordinator. The process $\YY(t;k) = \XX(\SS(t);k)$ in \eqref{eq:asp1} is an additive subordinated MP-OU process. 
The additive subordination of a one-dimensional and one-parameter OU process is discussed in \cite{li2013ornstein}.

We conclude this section by finding the symbol of the additive subordinated MP-OU process $\YY(t;k)$.
If $q_F(x,\xi)$ is the negative definite Fourier symbol of a Feller process and $\phi$ is the Bernstein function (characteristic exponent) of a L\'evy subordinator, then the symbol $q$ of the resulting subordinated process is the composition $q(\xi)=\phi(q_F(x,\xi))$ (\cite{orsingher2016time}, \cite{Evans2007}).
  In the additive subordination case,  Proposition \ref{Prop:symbLev}  considers the case of an additive subordinated L\'evy process.
  In the general additive case, $\phi$ is a Bernstein function only for fixed time $t$, giving rise to a time-dependent symbol  $q(t, \xx, \xi)$ whose derivation is non-trivial.
  Proposition \ref{prop:triplet} overcomes this difficulty.
\begin{proposition} \label{prop:CharMP-OU}
    Given an additive subordinated MP-OU process $\YY(t;k) = \XX(\SS(t);k)$, where $\XX(\ss;k)$ is given in \eqref{eq:MultiOUcomp}, the characteristic function of the increment $\YY(t_2;k) - \YY(t_1;k)$ is given by, for $\xx, \xxi \in \RR^{dk}$,
    \begin{equation} \label{eq:chinc2}
    \hat{\mu}^{\YY(\cdot;k)}_{t_1,t_2}(\xx, \boldsymbol{\xi})
    =
    \int_{\RR^k_+} \prod_{j=1}^k \hat{\mu}^j_{s_j}(\xx^{(j)}, \boldsymbol{\xi}^{(j)}) \pi_{t_1,t_2}(d\ss),
    \end{equation}
    where $\hat{\mu}^j_{s_j}(\xx^{(j)}, \boldsymbol{\xi}^{(j)})$ is the characteristic function of $\XX_j(s_j)-\xx^{(j)}$, for every $j \in \{1,\dots,k\}$, and $\xx^{(j)}, \xxi^{(j)} \in \RR^d$ are such that $\xx = (\xx^{(1)}, \dots, \xx^{(k)})$ and $\xxi = (\xxi^{(1)}, \dots, \xxi^{(k)})$.
\end{proposition}
\begin{proof}
    By Proposition \ref{prop:char},
    \begin{equation}
        \hat{\mu}^{\YY(\cdot;k)}_{t_1,t_2}(\xx, \boldsymbol{\xi})
        =
        \int_{\RR^k_+} \hat{\mu}_{\ss}(\xx, \boldsymbol{\xi}) \pi_{t_1,t_2}(d\ss).
    \end{equation}
    By construction,
    \begin{equation}
    \begin{split}
        \hat{\mu}_{\ss}(\xx, \boldsymbol{\xi}) 
        &=
        \mathbb{E}_{\xx}[ e^{i \xxi \cdot (\XX(\ss) - \xx)} ]
        =
        \mathbb{E}_{\xx}[ e^{i \xxi \cdot (\sum_{j=1}^k \XX(s_j \ee_j) - \xx)} ]
        \\
        &=
        \prod_{j=1}^k \mathbb{E}_{\xx^{(j)}}[ e^{i \xxi^{(j)} \cdot (\XX_j(s_j) - \xx^{(j)})} ]
        =
        \prod_{j=1}^k \hat{\mu}^j_{s_j}(\xx^{(j)}, \boldsymbol{\xi}^{(j)})
    \end{split}
    \end{equation}
\end{proof}
\begin{proposition} \label{prop:triplet}
    The additive subordinated 
       MP-OU process $\YY(t;k) = \XX(\SS(t);k)$, where $\XX(\ss;k)$ is given in \eqref{eq:MultiOUcomp},
       has symbol $q(t,\xx, \boldsymbol{\xi}) $ given by 
    \begin{equation}
        q(t,\xx, \boldsymbol{\xi}) =
        -i \boldsymbol{\gamma}_{\YY}(t,\xx) \cdot \boldsymbol{\xi} 
        +
        \frac{1}{2} \xxi \cdot \Sigma_{\YY}(t) \xxi 
        -
        \int_{\RR^d} \big(e^{i\xxi \cdot \yy} - 1 - i \xxi \cdot \yy \mathbbm{1}_{B}(\yy) \big){\nu}_{\YY}(t,\xx,d\yy),
    \end{equation}
    where
    \begin{align}
        \boldsymbol{\gamma}_{\YY}(t,\xx)
        &=
        \sum_{j=1}^k c_j(t) {\tilde{\KK}_j} ( {\boldsymbol{\theta}} -\xx) + \int_{\RR^k_+ \setminus \{\boldsymbol{0}\}} \int_{\RR^{dk}} \yy \mathbbm{1}_B(\yy){u}_{\ss}(\xx, \yy) d\yy \, \nu(t, d\ss)
        \\ 
        \Sigma_{\YY}(t)
        &=
        \sum_{j=1}^k c_j(t) \tilde{\boldsymbol{\Sigma}}_{j}
        \\
        {\nu}_{\YY}(t,\xx,d\yy)&=\int_{\RR^k_+ \setminus \{\boldsymbol{0}\}}(u_{\ss}(\xx, \yy)d\yy)\nu(t, d\ss),
    \end{align}
       where $\tilde{\boldsymbol{\Sigma}}_j = \tilde{\La}_j\tilde{\La}_j^{\top} = (\boldsymbol{0},\dots,\boldsymbol{\Sigma}_j,\dots,\boldsymbol{0})$, and $\tilde{\La}_j$ and $\tilde{\boldsymbol{K}}_j$ are $dk \times dk$ matrices built as block-diagonal matrices,
    \begin{equation*}
        \tilde{\boldsymbol{K}}_j = \diag (\boldsymbol{0},\dots,\KK_j,\dots,\boldsymbol{0})
        \quad \text{and} \quad
        \tilde{\La}_j = \diag (\boldsymbol{0},\dots,\La_j,\dots,\boldsymbol{0}).
    \end{equation*}
    Finally,
      ${\nu}_{\YY}(t,\xx,d\yy)$ is a time and space dependent Lévy measure and ${\nu}_{\YY}(t,\xx,\RR)<\infty$ if and only if ${\nu}( t, \RR)<\infty$. 
\end{proposition}
\begin{proof}
    From Theorem \ref{Li}, the generator $\GGG_t$ associated to the Markov process $\YY(t;k)$ is given by
    \begin{equation}
        \GGG_t f
        =
        \sum_{j=1}^k {c}_j(t) {\mathcal{G}}^{(j)} f
        +
        \int_{\RR^k_+ \setminus \{\boldsymbol{0}\} }({T}_{\ss}f-f)\nu(t,d\ss), 
        \quad 
        f
        \in \bigcap_{j=1}^kD(\mathcal{G}^{(j)}).
    \end{equation}
    For every $j \in \{1,\dots,d\}$, the operator $\GGG^{(j)}$ is the generator associated with the one-parameter Markov process $\XX^{(j)}(s_j;k)$, where $\XX^{(j)}(s_j;k) = \XX(s_j \ee_j;k)$.
       From \eqref{eq:PDO_GeneratorMarginalMarkov}, it holds
    \begin{equation} \label{eq:psudo_proof}
        \GGG^{(j)} f(\xx)
        =
        -(2\pi)^{-d/2} \int_{\RR^{dk}} e^{i\xx\boldsymbol{\xi}} q_j(\xx, \boldsymbol{\xi}) \hat{f}(\boldsymbol\xi) d\boldsymbol\xi,\quad f \in {\mathcal{S}(\RR^d)},
    \end{equation}
    where $q_j(\xx, \boldsymbol{\xi})$ is the symbol of
    $\XX^{(j)}(s_j;k)$.
    By the construction of $\XX(\ss;k)$ in \eqref{eq:MultiOUcomp}, it follows that $\XX^{(j)}(s_j;k) = \XX(s_j \ee_j;0) = (\XX_1(0), \dots, \XX_j(s_j), \dots , \XX_k(0))$.
    Therefore, the process $\XX^{(j)}(s_j;k)$ is a $\RR^{dk}$-valued OU process that satisfies the following SDE,
    \begin{equation}
        d\XX^{(j)}(s_j;k) = -\tilde{\boldsymbol{K}}_j
        \left( \XX^{(j)}(s_j;k) - \boldsymbol{\theta} \right) dt + \tilde{\La}_j d\boldsymbol{W}(s_j),  
        \quad
        \XX^{(j)}(0;k) = \boldsymbol{0},
    \end{equation}
    where $\boldsymbol{\theta} = (\boldsymbol{\theta}_1,\dots,\boldsymbol{\theta}_k)$, $\boldsymbol{W} = (\boldsymbol{W}_1, \dots, \boldsymbol{W}_k)$, and where $\tilde{\boldsymbol{K}}_j$ and $\tilde{\La}_j$ are $dk \times dk$ matrices built as block-diagonal matrices,
    \begin{equation*}
        \tilde{\boldsymbol{K}}_j = \diag (\boldsymbol{0},\dots,\KK_j,\dots,\boldsymbol{0})
        \quad \text{and} \quad
        \tilde{\La}_j = \diag (\boldsymbol{0},\dots,\La_j,\dots,\boldsymbol{0}).
    \end{equation*}
    Let $\tilde{\boldsymbol{\Sigma}}_j = \tilde{\La}_j\tilde{\La}_j^{\top} = (\boldsymbol{0},\dots,\boldsymbol{\Sigma}_j,\dots,\boldsymbol{0})$, it follows that the symbol $q_j(\xx,\xxi)$ in \eqref{eq:psudo_proof} is given by
    \begin{equation*}
        q_j(\xx,\xxi) = -i \xxi \cdot \tilde{\KK}_j (\boldsymbol{\theta} - \xx)+ \frac{1}{2}\xxi \cdot \tilde{\boldsymbol{\Sigma}}_j \xxi
        =
        -i \xxi^{(j)} \cdot \KK_j (\boldsymbol{\theta}_j - \xx^{(j)})+ \frac{1}{2}\xxi^{(j)} \cdot \boldsymbol{\Sigma}_j \xxi^{(j)},
    \end{equation*}
    where $\xx^{(j)},\xxi^{(j)} \in \RR^d$ are such that $\xx = (\xx^{(1)}, \dots, \xx^{(k)})$ and $\xxi = (\xxi^{(1)}, \dots, \xxi^{(k)})$.
    From \eqref{eq:PDO_SemigroupMarginalMarkov}, we have
    \begin{equation*}
        \int_{\RR^k_+ \setminus \{\boldsymbol{0}\}}(T_{\ss}f(\xx)-f(\xx))\nu(t, d\ss) 
        =(2\pi)^{-d/2} \int_{\RR^{dk}} e^{i\xx \cdot \xxi} \int_{\RR^k_+ \setminus \{\boldsymbol{0}\}} \big( \hat{\mu}_{\ss}(\xx, \boldsymbol{\xi})-1 \big) \nu(t,d\ss) \hat{f}(\boldsymbol\xi) d\xxi,
    \end{equation*} 
    where the change of integration is justified since
    $|\hat{\mu}_{\ss}(\xx, \boldsymbol{\xi})-1|\leq |E[\XX(\ss); \XX(\ss)\leq 1]|\leq C|\ss|$ and $\int_{\RR^k_+ }(|s|\wedge 1 )\nu(t, d\ss)<\infty$.
    %
    We have
    \begin{equation*}
    \begin{split}
        &\int_{\RR^k_+ \setminus \{\boldsymbol{0}\}} (\hat{\mu}_{\ss}(\xx, \boldsymbol{\xi})-1) \nu(t, d\ss) =
        \int_{\RR^k_+ \setminus \{\boldsymbol{0}\}} \left( \int_{\RR^{dk}}(e^{i\yy \cdot \xxi}-1) u_{\ss}(\xx, \xx+\yy)d\yy \right) \nu(t, d\ss)
        \\
        &= \int_{\RR^k_+ \setminus \{\boldsymbol{0}\}}\int_{\RR^{dk}}(e^{i \yy \cdot \xxi}-1-i \yy \cdot \xxi\boldsymbol{1}_B(\yy) ) u_{\ss}(\xx, \xx+\yy) d\yy \nu(t, d\ss)
        \\
        &\qquad + \int_{\RR^k_+ \setminus \{\boldsymbol{0}\}}\int_{\RR^{dk}}i \yy  \cdot \xxi \boldsymbol{1}_B(\yy) u_{\ss}(\xx, \xx+\yy)d\yy\nu(t, d\ss)
         \\
        &\overset{*}{=} \int_{\RR^{dk}}(e^{i \yy \cdot \xxi} -1-i \yy \cdot \xxi \boldsymbol{1}_B(\yy ) )\int_{\RR^k_+ \setminus \{\boldsymbol{0}\}} u_{\ss}(\xx, \xx+\yy)d\yy\nu(t,d\ss)
        \\
        &\qquad + \int_{\RR^k_+ \setminus \{\boldsymbol{0}\}}\int_{\RR^{dk}} i\yy \cdot \xxi \boldsymbol{1}_B(\yy) u_{\ss}(\xx, \xx+\yy)d\yy\nu(t, d\ss)
        \\
        &= \int_{\RR^{dk}}(e^{i\yy \cdot \xxi}-1-i\yy \cdot \xxi\boldsymbol{1}_B(\yy ) )\nu_{\YY}(t,\xx,d\yy)\\
        &\qquad 
        + \int_{\RR^k_+ \setminus \{\boldsymbol{0}\}}\int_{\RR^{dk}} i \yy \cdot \xxi\boldsymbol{1}_B(\yy) u_{\ss}(\xx, \xx+\yy)d\yy\nu(t, d\ss).
    \end{split}
    \end{equation*} 
    The change of integration order in $\overset{*}{=}$ is justified since ${\nu}_{\YY}(t,\xx,d\yy) = \int_{\RR^k_+}u_{\ss}(\xx, \xx+\yy)d\yy \, \nu(t, d\ss)$ is a L\'evy  measure, i.e.,
    \begin{equation}\label{eq:levmeas}
    \begin{split}
        &\int_{\RR^k_+}\int_{\RR^{dk}}(1\wedge |\yy|^2)u_{\ss}(\xx, \xx+\yy)d\yy \, \nu(t, d\ss)<\infty.
    \end{split}
    \end{equation}  
   The proof of \eqref{eq:levmeas} follows suitably from the proof of Theorem 4.2 in \cite{li2016additive}.
    As a consequence ${\nu}_{\YY}(t,\xx,\RR)<\infty$ if and only if ${\nu}( t, \RR)<\infty$ (see Remark 4.3 \cite{li2016additive}).  
    Putting everything together
    \begin{equation}
    \begin{split}
        \mathcal{G}_t f
        &= -(2\pi)^{-d/2}\int_{\RR^{dk}}e^{i\xx \cdot \xxi} \left[
        \sum_{j=1}^k c_j(t) q_j(\xx, \xxi)\right.\\
        &\left.- \int_{\RR^k_+ \setminus \{\boldsymbol{0}\}}\int_{\RR^{dk}} i \yy \cdot \xxi\boldsymbol{1}_B(\yy) u_{\ss}(\xx, \xx+\yy)d\yy \, \nu(t, d\ss)\right. \\
        &\left.- \int_{\RR^{dk}}(e^{i\yy \cdot \xxi}-1-i\yy \cdot \xxi \boldsymbol{1}_B(\yy ) )\nu_{\YY}(t,\xx,d\yy) 
    \right]
        \hat{f}(\xxi)d\xxi,
    \end{split}
    \end{equation}
    and the statement follows naturally.
\end{proof}

In the next section, we consider a specific transformation of $\YY(t;k) = \XX(\SS(t);k)$ to provide an example of interest in financial applications.  

\subsection{Construction of a M-OU process}
In this section, we provide a constructive example of a linear transformation of the additive subordinated MP-OU process $\YY(t;k)$ based on the following goals:
\begin{enumerate}
    \item Allow for {\it independence} among one-dimensional marginals evolving on different time scales $\Rightarrow$ we build on Corollary~\ref{cor:indep};
    \item Ensure {\it parsimony} in the number of parameters $\Rightarrow$ we carefully select an appropriate family of additive subordinators;
    \item Maintain analytical {\it tractability} and accommodate {\it time-inhomogeneity} $\Rightarrow$ we adopt a dependence structure that is kept as simple as possible.
\end{enumerate}

Let us define a $(d+1)$-parameter process $\XX(\ss)$ on $\RR^d$ as follows:
\begin{equation}\label{eq:MultiOUFB}
    \XX(\ss) =
    \begin{pmatrix}
        U_1(s_1) +  a_1U_{d+1}(s_{d+1})
        \\
        \vdots\\
        U_d(s_d) +a_d U_{d+1}(s_{d+1})
    \end{pmatrix},
    \quad
    \ss \in \RR_+^{d+1},
\end{equation}
where $a_j \in \RR_+$, and where $U_j$, for $j \in \{1,\ldots, d\}$, are independent one-dimensional OU processes, defined as unique strong solutions of the following SDEs
\begin{equation} \label{eq:SDE_Uj}
    \begin{cases}
        \mathrm{d} U_j(t) = -k_j(U_j(t)-\theta_j) \mathrm{d}t + \sigma_j \mathrm{d}W_j(t),
        \\
        U_j(0) = 0,
    \end{cases}
    \quad j \in \{1,\dots,d\},
\end{equation}
where $k_j, \sigma_j \in \RR_+$ and $\theta_j \in \RR$, for $j \in \{1,\dots,d\}$.
The process $U_{d+1}$ in \eqref{eq:MultiOUFB} is a one-dimensional OU process, independent of $U_1,\dots,U_d$, defined as the unique strong solution of the following SDEs
\begin{equation}\label{eq:SDE_U}
\begin{cases}
    dU_{d+1}(t) = - U_{d+1}(t) dt + dW_{d+1}(t),
    \\
    U_{d+1}(0) = 0.
\end{cases}
\end{equation}
By Corollary \ref{cor:indep}, the OU processes $U_1,\dots,U_d$ in \eqref{eq:MultiOUFB} are sufficient and necessary to model independence among marginals of $\XX(\ss)$.
Since the characteristic exponent $\psi_t(z)$ of a one dimensional  OU process $V(t)$ with parameters $(k_V, \theta_V, \sigma_V)$ (\cite{gardiner1985handbook}, page 75) is
\begin{equation}
   \psi_t(\xi)=i\theta_V(1-e^{-k_Vt})\xi-\frac{\sigma_V^2}{4k_V}(1-e^{-2k_Vt})\xi^2,
\end{equation}
the characteristic exponent of $\XX(\ss)$ is
\begin{equation}\label{eq:expOU}
\begin{split}
   \psi_{\ss}(\xxi)
   &=
   \sum_{j=1}^d
   \psi_{U_j(s_j)}(\xi_j)+\psi_{U_{d+1}(s_{d+1})} \bigg(\sum_{h=1}^d\alpha_h\xi_h \bigg)
   \\
   &=  
   \sum_{j=1}^d
   \bigg\{ 
   i \theta_j(1-e^{-k_js_j})\xi_j - \frac{\sigma_j^2}{4k_j}(1-e^{-2 k_j s_j})\xi_j^2
   \bigg\} 
   - \frac{1}{4} \big( 1-e^{-2s_{d+1}} \big)
   \left(\sum_{h=1}^d\xi_h\alpha_h\right)^2.
   \end{split}
\end{equation}

Let  $q_j(x_j, \xi_j)$ be the symbol of $U_j(s_j)$, for every $j \in \{1,\dots, d\}$, and $q_{d+1}(x_{d+1}, \xi_{d+1})$ be the symbol of $U_{d+1}$. We have
\begin{equation}
    q_j(x_j, \xi_j)=ik_j(\theta_j-x_j)\xi_j-\frac{1}{2}\sigma^2_j\xi_j,
\end{equation}
and
\begin{equation}
       q_{d+1}(x_{d+1}, \xi_{d+1}) = -i x_{d+1}\xi_{d+1}-\frac{1}{2} \xi_{d+1}.
  \end{equation}

\begin{proposition} \label{prop:ScaledMargProcesses}
    Let $\VV(t)=\XX(\frac{t}{k_1},\ldots,\frac{t}{k_d},t)$, where $k_j > 0$ are those of \eqref{eq:SDE_Uj}.
    Then, $\VV(t)$ is a $d$-dimensional OU process satisfying the following SDE
    \begin{equation*}
    \begin{cases}
        d \VV(t) = -\boldsymbol{I} (\VV(t)-\boldsymbol{\theta}) dt + \La \, d\WW(t),
        \\
        \VV(0) = \boldsymbol{0},
    \end{cases}
    \end{equation*}
    where $\boldsymbol{I}$ is the identity matrix, $\boldsymbol{\theta} = (\theta_1,\ldots,\theta_d)$, and $\Si = \La\La^{\top} = (\tilde{\sigma}_{j,h})$, where $\tilde{\sigma}_{j,j}=\frac{\sigma_j^2}{k_j} + a_j^2 $ and $\tilde{\sigma}_{j,h} = a_j a_h$, for $j,h \in \{1,\dots,d\}$, $h \neq j$.
    Furthermore, for every $j \in \{1,\dots,d\}$, the one-dimensional marginal process $V_j(t)$ is a OU process that satisfies the SDE
    \begin{equation}\label{eq:margin}
    \begin{cases}
        \mathrm{d} V_j(t) = -(V_j(t)-\theta_j) \mathrm{d}t + \sigma^V_j \mathrm{d} \widetilde{W}_j(t),
        \\
        V_j(0) = 0,
    \end{cases}
    \end{equation}
    where $\sigma^V_j= \sqrt{\frac{\sigma_j^2}{k_j} + a_j^2 } $.
\end{proposition}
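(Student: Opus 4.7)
The plan is to carry out a deterministic time change on each of the independent OU components $U_j$, then recombine. Define $\widetilde{U}_j(t):=U_j(t/k_j)$. Starting from the SDE in \eqref{eq:SDE_Uj} and applying the change of variable $s_j=t/k_j$, one obtains
\begin{equation*}
d\widetilde{U}_j(t)
= -k_j(\widetilde{U}_j(t)-\theta_j)\,\tfrac{1}{k_j}\,dt+\sigma_j\,dW_j(t/k_j)
= -(\widetilde{U}_j(t)-\theta_j)\,dt+\tfrac{\sigma_j}{\sqrt{k_j}}\,dB_j(t),
\end{equation*}
where $B_j(t):=\sqrt{k_j}\,W_j(t/k_j)$ is a standard Brownian motion by Brownian scaling. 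Thus each $\widetilde{U}_j$ is itself a one-dimensional OU process with mean-reversion rate $1$, long-run mean $\theta_j$ and volatility $\sigma_j/\sqrt{k_j}$.

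Next, recall from \eqref{eq:SDE_U} that $a_j U(t)$ satisfies $d(a_jU)=-(a_jU)\,dt+a_j\,dW$. Since $V_j(t)=\widetilde{U}_j(t)+a_jU(t)$ by the definition of $\VV(t)=\XX(t/k_1,\dots,t/k_d,t)$ and since the $W_j$'s and $W$ are mutually independent, summing the SDEs gives
\begin{equation*}
dV_j(t)=-(V_j(t)-\theta_j)\,dt+\tfrac{\sigma_j}{\sqrt{k_j}}\,dB_j(t)+a_j\,dW(t),\qquad j=1,\dots,d.
\end{equation*}
Stacking these $d$ equations yields $d\VV(t)=-\boldsymbol{I}(\VV(t)-\boldsymbol{\theta})\,dt+\La\,d\WW(t)$ with $\WW=(B_1,\dots,B_d,W)^{\top}$ and $\La\in\mathcal{M}_{d\times(d+1)}(\RR)$ whose $j$-th row has entry $\sigma_j/\sqrt{k_j}$ in position $j$, entry $a_j$ in position $d+1$, and zeros elsewhere. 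A direct computation of $\La\La^{\top}$ then gives $\tilde\sigma_{j,j}=\sigma_j^2/k_j+a_j^2$ and $\tilde\sigma_{j,h}=a_ja_h$ for $h\neq j$, as required.

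For the marginal statement \eqref{eq:margin}, the martingale part of $dV_j$ is $\tfrac{\sigma_j}{\sqrt{k_j}}\,dB_j+a_j\,dW$, a sum of independent scaled Brownian motions with quadratic variation $(\sigma_j^2/k_j+a_j^2)\,dt$. By Lévy's characterization, the process
\begin{equation*}
\widetilde W_j(t):=\frac{1}{\sigma^V_j}\left(\tfrac{\sigma_j}{\sqrt{k_j}}B_j(t)+a_jW(t)\right),\qquad \sigma^V_j=\sqrt{\tfrac{\sigma_j^2}{k_j}+a_j^2},
\end{equation*}
is a standard Brownian motion, and we may rewrite the marginal dynamics as $dV_j=-(V_j-\theta_j)dt+\sigma^V_j\,d\widetilde W_j$, with $V_j(0)=0$ since $U_j(0)=U(0)=0$.

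The only delicate point is the time-change step, and it is not really an obstacle: because $t\mapsto t/k_j$ is deterministic and strictly monotone, no stochastic calculus beyond Brownian scaling is needed, and the rest of the argument is a linear recombination of independent OU processes sharing a common factor $U$.
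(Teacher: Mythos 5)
Your proof is correct, but it takes a different route from the paper's. The paper works at the level of the explicit Gauss--Markov representation $Z(t)=\theta_Z(1-e^{-k_Z t})+\sigma_Z W\bigl(\tfrac{1-e^{-2k_Zt}}{2k_Z}\bigr)$: it writes out $V_j(t)=U_j(t/k_j)+a_jU(t)$ in that form, merges the two independent Gaussian terms by Brownian scaling into a single time-changed Brownian motion with variance parameter $\sigma_j^2/k_j+a_j^2$, reads off the covariances $(\Sigma_t)_{j,h}$ directly, and concludes by matching laws (invoking the existence of stationary distributions). You instead stay at the SDE level: you perform the deterministic time change $s_j=t/k_j$ in the integral form of \eqref{eq:SDE_Uj}, identify $B_j(t)=\sqrt{k_j}\,W_j(t/k_j)$ as a standard Brownian motion, add the SDE for $a_jU$, and then use L\'evy's characterization to repackage the martingale part as $\sigma_j^V\,d\widetilde W_j$. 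Your approach has the advantage of producing the asserted SDEs directly and pathwise, which matches the way the proposition is phrased, and your explicit $d\times(d+1)$ factorization of $\La$ makes the verification $\La\La^{\top}=\Si$ immediate; the paper's approach yields the covariance function $\Sigma_t$ and the marginal law more explicitly, which is what is used downstream. Both arguments rest on the same two ingredients (Brownian scaling under a deterministic time change and independence of $W_1,\dots,W_d,W$), and I see no gap in yours.
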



\begin{proof}
A one dimensional OU process $Z(t)$ with parameters $(k_Z, \theta_Z, \sigma_Z)$ is a Gauss-Markov process and can be written as (\cite{di2001computational}) 
\begin{equation} \label{transf_OU_BM}
    Z(t)=\theta_Z(1-e^{-k_Z t})+\sigma_Z W\left(\frac{1-e^{-2k_Zt}}{2k_Z}\right),
\end{equation}
where we have used that $Z(0)=0$. Let $V_j(t) = X_j(t/k_1,\ldots, t/k_d,t)=U_j(t/k_j)+\alpha_jU(t)$.
From \eqref{transf_OU_BM}, we have:
\begin{equation*}
\begin{split}
    V_j (t)& = \theta_j(1-e^{-{t}})+\sigma_j  W_j \left( \frac{1-e^{-2t}}{2k_j} \right) + a_j  W\left( \frac{1-e^{-2t}}{2} \right)\\
   & \overset{d}{=}
    \theta_j(1-e^{-t})+\sqrt{\frac{\sigma_j^2}{k_j} + a_j^2} \,\,  \widetilde{W}_{j}\left( \frac{1-e^{-2t}}{2} \right), 
    \end{split}
\end{equation*}
where we used the independence between $W(t)$ and $W_j(t)$ and the scaling properties of the Brownian motion and where $\widetilde{W}_{j}$, $j=1,\ldots, d$, are standard non-independent Brownian motions. 
Therefore $V_j(t)$ satisfies Equation \eqref{eq:margin}.
Furthermore, for $j,h \in\{1,\dots,d\}$, $j \neq h$,
\begin{equation*}
\begin{split}
    (\Sigma_t)_{j,h} = \mathrm{Cov}(V_j (t), V_h(t))
    &= 
    \mathrm{Cov} \left( a_j  W\left( \frac{1-e^{-2t}}{2} \right), a_h  W\left( \frac{1-e^{-2t}}{2} \right) \right)
    \\
    &= 
    a_j a_h  \left( \frac{1-e^{-2t}}{2} \right) = \tilde{\sigma}_{j,h}\left( \frac{1-e^{-2t}}{2} \right),
    \end{split}
    \end{equation*}
    and
    \begin{equation*}
        (\Sigma_t)_{j,j} = \left( \frac{\sigma_j^2}{k_j} + a_j^2 \right) \left( \frac{1-e^{-2t}}{2} \right).
    \end{equation*}
    The proof is complete, since the processes admit stationary distributions (\cite{forman2008pearson}).
\end{proof}

\begin{remark}
   The dependence structure of the process $\XX(\ss)$ in \eqref{eq:MultiOUFB} can be weakened by considering $\aa \cdot \tilde{\UU}$, instead of the process $(a_1U_{d+1},\dots, a_d U_{d+1})$, where $\tilde{\UU}=(\tilde{U}_1,\ldots, \tilde{U}_d)$ is a multivariate Ornstein-Uhlenbeck process with covariance function $\Sigma=(\sigma_{jk})$ with $\sigma_{jj}=1$, for $j \in \{1,\ldots, d\}$. In this case, the characteristic function in \eqref{eq:expOU} becomes
\begin{equation}\label{eq:expOU2}
\begin{split}
   \psi_{\ss}(\xxi)
   &=
   \sum_{j=1}^d
   \psi_{U_j(s_j)}(\xi_j)+\psi_{\tilde{\UU}(s_{d+1})} (\boldsymbol{a}\odot\xxi),
   \end{split}
\end{equation}
where $\boldsymbol{a}\odot\xxi=(a_1\xi_1,\ldots, a_d\xi_d)$ is the Hadamard product. 
By our choice of the parameters, Proposition \ref{prop:ScaledMargProcesses} still holds. 
\end{remark}


\subsection{The Sato subordinator}

Sato processes are additive processes associated to self-decomposable distributions, \cite{sato1991self}.
In particular, if $\ZZ(t)$ is a $\rho$-Sato process, then
\begin{equation*}
\boldsymbol{Z}(t)\overset{\mathcal{L}}{=}t^{\rho}\boldsymbol{Z}, 
\end{equation*}
where $\ZZ=\ZZ(1)$ is a zero mean self-decomposable random variable and $\rho$ is the self-similar exponent (see   \cite{carr2007self,eberlein2009sato,li2016additive} for a thorough exposition).
A $k$-dimensional Sato subordinator is a $\RR^k$-valued Sato process with positive and non-decreasing one-dimensional marginal trajectories.

We study multivariate Sato subordinators $\SS(t)$ such that $\SS(1)$ has distribution in the class of  tempered stable ($T\alpha S$) distributions introduced in \cite{rosinski2007tempering}. 
Tempered stable distributions are obtained by tempering the radial component of the L\'evy measure (in polar coordinates) of an $\alpha$-stable distribution, $\alpha \in (0,2)$.
It is well known that the L\'evy measure $\nu_0$ of an $\alpha$-stable distribution, $\alpha\in (0,2)$, is of the form
\begin{equation}\label{nustable}
    \nu_0(E) = \int_{\mathcal{S}^{d-1}} \int_{\RR_+} \boldsymbol{1}_E(r \w)\frac{1}{r^{\alpha+1}}\mathrm{d}r \, \lambda(\mathrm{d}\w),
    \quad E \in \mathcal{B}(\RR^d),
\end{equation}
where $\lambda$ is a finite measure on the unit $(d-1)$-sphere $\mathcal{S}^{d-1} \subseteq \RR^d$.
Tempered stable distributions are formally defined as follows (\cite{rosinski2007tempering}).
\begin{definition}\label{tstable}
A probability measure $\mu$ on $\RR^d$ is called tempered $\alpha$-stable (T$\alpha$S)
if it is infinitely divisible without Gaussian part and it has L\'evy measure $\nu$ of the following form
\begin{equation}\label{nuAstable}
    \nu(dr, d\w)=\frac{b(r, \w)}{r^{\alpha+1}}\mathrm{d}r \,\lambda(\mathrm{d}\w),
\end{equation}
where $\alpha\in (0,2)$, $\lambda$ is a finite measure on $\mathcal{S}^{d-1}$, and $b:(0,\infty) \times \mathcal{S}^{d-1} \rightarrow (0,\infty)$ is a Borel function such that
$b(\cdot,\w)$ is completely monotone with $\lim_{r\rightarrow\infty}b(r, \w) = 0$, for every $\w\in \mathcal{S}^{d-1}$.
The measure $\mu$ is called a proper T$\alpha$S distribution if, in addition to the above, $b(0+, \w) = 1$ for each $\w\in \mathcal{S}^{d-1}$.
\end{definition}

\setcounter{example}{1}
\begin{example}
{\bf{Continued}} 
Let the subordinator  $\SS(t)$ in \eqref{GMPP} be a Sato process. By Proposition \ref{Prop:symbLev} the process $\YY(t)$ in \eqref{GMPP} has symbol given by

\begin{equation}\label{eq:symbAdd}
        q(t, \xxi) = - \frac{d}{dt}\Psi_t^S(\qq(\xxi))=-\rho t^{\rho-1}(\Psi^S_1)'(t^\rho\qq(\xxi)),
    \end{equation}
        where $(\Psi^S_1)'(x)$ is the derivative of $\Psi^S_1(x)$ and $\qq^{(j)}(\xxi)=\log\hat{\mu}_{\AA_j\BB_j(1)}(\xxi)$ is the symbol of the multivariate Brownian motion $\AA_j\BB_j(s_j).$
\end{example}

We consider tempered stable distributions with an exponential tempering function $b(r, \w)=e^{-\beta( \w)r}$ (see \cite{semeraro2022multivariate}).
\begin{definition}
An infinitely divisible  probability measure $\mu$ on $\RR^d$ is called  exponential T$\alpha$S ($\ets$) if it is without Gaussian part and it has L\'evy measure $\nu$ of the form
\begin{equation}\label{nuTstable}
    \nu(E) = 
    \int_{\mathcal{S}^{d-1}} \int_{\RR_+} \boldsymbol{1}_E(r \w) \frac{e^{-\beta( \w)r}}{r^{\alpha+1}}\mathrm{d}r \, \lambda(\mathrm{d}\w),
    \quad E \in \mathcal{B}(\RR^d),
\end{equation}
where $\alpha\in[0, 2),$  $\lambda$ is a finite measure on $\mathcal{S}^{d-1}$ and $\beta: \mathcal{S}^{d-1}\rightarrow (0, \infty)$ is a Borel-measurable function.
\end{definition}
In \cite{semeraro2022multivariate}, it is proved that the L\'evy measure in \eqref{nuTstable}  is the L\'evy measure associated to an $\ets$ Sato subordinator $\SS(t)$ if and only if
$\alpha\in (0,1)$  and $\lambda$ has support on $\mathcal{S}^{d-1}_+ = \mathcal{S}^{d-1} \cap \RR_+^d$.
We now consider a $\ets$  distribution on $\RR^{d+1}$ with independent components.

If $\nu$ is a one-dimensional $\ets$ L\'evy measure from equation \eqref{nuTstable} we have
\begin{equation*}
\nu(dx)=\left(\boldsymbol{1}_{(-\infty,0)}(x)\frac{e^{-\beta^+ |x|}}{|x|^{\alpha+1}}\lambda^++\boldsymbol{1}_{(0,\infty)}(x)\frac{e^{-\beta^- x}}{x^{\alpha+1}}\lambda^-\right)dx,
\end{equation*}
where $\beta^+=\beta(1)$, $\beta^-=\beta(-1)$, $\lambda^+=\lambda(\{1\})$ and $\lambda^-=\lambda(\{-1\})$. Thus, $\ets$ distributions are  a multivariate version of the  tempered stable distributions studied in \cite{kuchler2013tempered}, with the restriction that $\alpha$ is constant. The condition $\lambda$ has support on $\mathcal{S}^{d-1}_+ = \mathcal{S}^{d-1} \cap \RR_+^d$ implies  
\begin{equation}\label{nuTstable2}
\nu(dx)=\boldsymbol{1}_{(0,\infty)}(x)\frac{e^{-\beta x}}{x^{\alpha+1}}\lambda dx.
\end{equation}
If $\mu$ has L\'evy measure \eqref{nuTstable2} we write $\mu\sim\ets(\alpha, \beta, \lambda)$.
Let $\SS=(S_1,\ldots, S_{d+1}),$ with independent components and $S_j \sim \ets(\alpha, \beta_j, \lambda_j)$, $j \in \{1,\dots,d+1\}$, $\alpha\in (0,1)$, and let $\SS(t)$ be the zero drift Sato process such that $\SS(1)\overset{\mathcal{L}}{=}\SS$.



The process $\SS(t)$ 
 has characteristic function the form
\begin{equation*}
\begin{split}
\hat{\mu}^S_{t}( \xxi)=&\prod_{j=1}^{d+1}\exp\{\Gamma(-\alpha)\lambda_j[(\beta_j it^\rho\xi_j)^{\alpha}-\beta_j^{\alpha}]\}.
\end{split}
\end{equation*}

By independence of the increments of $\SS(t)$, the characteristic function $\hat{\mu}^S_{u,t}(\xi)$ of the increment $\SS(t)-\SS(u)$ is given by 
\[
\hat{\mu}^S_{u,t}(\xi)=\frac{\hat{\mu}^S_{t}(\xi)}{\hat{\mu}^S_{u}(\xi)}.
\]
\begin{example}
We specify the  Sato subordinator $\SS(t)$ to have unit time marginal  inverse Gaussian (IG) distributions.
Let $S_j$ have inverse Gaussian distribution (IG) with parameters:
\begin{equation}\label{sub}
S_{j}\sim IG\left(\frac{1}{2},\beta_j,  \lambda_j\right)
,\,j=1,...,d+1.
\end{equation}

Let now $\SS(t)$ be the  Sato subordinator  with unit time distribution of $\SS$, its time $t$ characteristic   function $\hat{\mu}_{\SS(t)}(\zz)$ has the form
\begin{equation*}
\begin{split}
\hat{\mu}^S_{t}( \xxi)=&\prod_{j=1}^{d+1}\exp\{-\lambda_j[\sqrt{\beta_j-2it^\rho\xi_j}-\sqrt{\beta_j}]\}\cdot\\
\end{split}
\end{equation*}

\end{example}
\subsection{ Sato subordination of M-OU processes}
To construct the subordinated process and apply Theorem \ref{Li}, we need to regularize the factor-based Sato process. In fact, if $0<\rho<1$, $\nu_F(t,\cdot)$ explodes for $t\rightarrow 0$. We define $\tilde{Z}_j(t) = Z_j(t+t_0) - Z_j(t_0)$ and  $\tilde{C}(t)=C(t+t_0)-C(t_0)$ and we choose $t_0>0$ for $\rho<1$ and $t_0\geq0$ for $\rho>1$, see \cite{li2016additive} for more details. The characteristic function of the regularized Sato process is 
$\hat{\mu}^S_{t}(\xi)={\hat{\mu}^S_{t}(\xi)}/{\hat{\mu}^S_{t_0}(\xi)}.$
Notice that the parameter $t_0$ can be fixed in advance and does not need to be calibrated.

In what follows, we assume that $\SS(t)$ is a regularized Sato process. When explicit computations are provided, for simplicity we assume $\rho>1$ and choose $t_0=0$. 

\begin{definition} \label{def:MultiOU2}
    The $\RR^{d}$-valued process defined by    
    \begin{equation}\label{eq:MultiOU2}
        \YY(t)=
        \begin{pmatrix}
            U_1(S_1(t)) +  a_1 U_{d+1}(S_{d+1}(t))
            \\
            \cdots\\
            U_d(S_d(t)) + a_d U_{d+1}(S_{d+1}(t)),
        \end{pmatrix},
    \end{equation}
    where $\SS(t)=(S_1(t),\ldots, S_{d+1}(t))$ is a (regularized) Sato subordinator with independent components, is a factor-based Sato Ornstein-Uhlenbeck (Sato-OU) process.
\end{definition}
The proposed construction builds on a simple factor structure widely used in financial applications to model the common and idiosyncratic components of returns. In the Lévy setting, i.e. when the multiparameter process $\UU$ is a Lévy process, the resulting process $\YY(t)$ is an additive process. If, instead, $\UU$ is of OU type, the resulting process is, in general, no longer Markovian. Nevertheless, we can rely on the fact that $\YY(t; d+1)$ keeps the Markov property.

Let us consider the process $\UU(\ss;d+1)=(U_1(s_1), \ldots, U_d(s_d), U_{d+1}(s_{d+1}))$, and let $\YY(t;d+1) = \UU(\SS(t);d+1)$.
By construction, the process $\YY(t;d+1)$  is a time-inhomogeneous Markov process, whose generator follows from Corollary \ref{cor:generatorind}.
  For $\xx \in \RR^{d+1}$,
    \begin{equation} \label{eq:Generator_MultiOU2}
        \mathcal{G}_t f(\xx) =
        \sum_{j=1}^{d+1}\int_{\RR_+}\left(\int_{\RR} f(x_1,\ldots, \underbrace{y}_{j\text{-th}},\ldots, x_{d+1}) u^{j}_{s_j}(x_j, y) d y - f(\xx)\right) \nu_j(t, ds_j)
    \end{equation}
    for $f \in \cap_{j=1}^{d+1} D(\GGG^{(j)})$, where $D(\GGG^{(j)})$ are the domains of the marginal generator, and where $u^j_{s_j}(x, y)$, for each $j \in \{1,\ldots, d+1\}$, is the transition density of the one-dimensional OU process $U_j(t)$ in \eqref{eq:SDE_Uj},  
    and 
    \begin{equation}\label{eq:Levj}
        \nu_j(t, ds_j)
        =
        \rho(t+t_0)^{\alpha_j\rho-1} \frac{\beta_js_j(t+t_0)^{-\rho}+\alpha}{s_j^{\alpha+1}} \exp\left\{{-\beta_j s_j(t+t_0)^{-\rho}}\right\} ds_j.
    \end{equation}
Since
the process $\YY(t)$ defined in Definition \ref{def:MultiOU2} is a linear transformation of the Markov process $\YY(t;k)$, we can use the generator of $\YY(t;k)$ given in~\eqref{eq:Generator_MultiOU2} or its transition semigroup to find some statistics of the process.
Indeed, $\YY(t) = {A}{\YY(t;d+1)}$, where $A$ is a $d \times (d+1)$ real matrix such that $A=(I || \aa)$, where $I$ is the $d \times d$ identity matrix, $\aa = (a_1,\dots,a_d)$, and $||$ denotes the row concatenation.
The generator in~\eqref{eq:Generator_MultiOU2} is still relevant for the process $\YY(t)$.
Indeed, it is sufficient to restrict the generator $\GGG_t$ in~\eqref{eq:Generator_MultiOU2} to all functions $f \in \cap_{j=1}^{d+1} D(\GGG^{(j)})$ such that we can write $f(\xx) = \varphi(A\xx)$, for some function $\varphi \colon \RR^d \to \RR$.
In this case,

\begin{equation}
{\footnotesize
\begin{split}
    &\GGG_t \varphi (A\xx)
    =
    \int_{\RR_+}\left(\int_{\RR} \varphi(x_1+a_1z,\dots, x_d+a_d z) u^{d+1}_{s_{d+1}}(x_{d+1}, z) d z - \varphi (A(\xx))\right) \nu_{d+1}(t, ds_{d+1})
    \\
    &+
    \sum_{j=1}^{d}\int_{\RR_+}\left(\int_{\RR} \varphi(x_1+a_1x_{d+1},\dots, y+a_jx_{d+1},\dots, x_d+a_d x_{d+1}) u^{j}_{s_j}(x_j, y) d y - \varphi (A\xx)\right) \nu_j(t, ds_j).
\end{split}}
\end{equation}
Relying on the Markov property of $\YY(t, d+1)$  we can also find the characteristic function 
of the increment of $\YY(t_2)-\YY(t_1)$ conditional to $\YY(t_1;d+1)$.    
By Proposition \ref{prop:CharMP-OU}, we find the characteristic function of the increment of
$\YY(t_2;d+1) - \YY(t_1;d+1)$, that  is given by, for $\xx, \xxi \in \RR^{d+1}$,
    \begin{equation}\label{eq:chinc}
    \hat{\mu}^{\YY(\cdot;d+1)}_{t_1,t_2}(\xx, \boldsymbol{\xi})
    =
    \int_{\RR^{d+1}_+} \prod_{j=1}^{d+1} \hat{\mu}^j_{s_j}(x_j, \xi_j) \pi_{t_1,t_2}(d\ss)
    =
    \prod_{j=1}^{d+1} \int_{\RR^{d+1}_+} \hat{\mu}^j_{s_j}(x_j, \xi_j) \pi^j_{t_1,t_2}(d\ss),
    \end{equation}
    where $\hat{\mu}^j_{s_j}(x_j, \xi_j)$ is the characteristic function of $U_j(s_j)-x_j$, for every $j \in \{1,\dots,d+1\}$, $\xx = (x_1,\dots,x_{d+1})$, $\xxi = (\xi_1, \dots, \xi_{d+1})$, and where $\pi^{j}_{t_1,t_2}(ds_j)$, for $j \in \{1,\dots,d+1\}$, is the law of the increment $S_j(t_2) - S_j(t_1)$.
  
Since $\YY(t) = A \YY(t;d+1)$, we have, for $\xxi \in \RR^{d}$ and $\xx \in \RR^{d+1}$,
\begin{equation}
\begin{split}
    \mathbb{E}&[e^{i \xxi \cdot (\YY(t_2) - \YY(t_1)} \vert \YY(t;d+1) = \xx] 
    = 
    \mathbb{E}[e^{i \xxi \cdot A(\YY(t_2;d+1) - \YY(t_1;d+1))} \vert \YY(t;d+1) = \xx]
    \\
    &=
    \mathbb{E}[e^{i (A^{\top}\xxi) \cdot (\YY(t_2;d+1) - \YY(t_1;d+1))} \vert \YY(t;d+1) = \xx]
    \\
    &=
    \hat{\mu}^{\YY(\cdot;d+1)}_{t_1,t_2}(\xx, A^{\top}\boldsymbol{\xi})
    \\
    &=
    \prod_{j=1}^d \int_{\RR^+} \hat{\mu}^{j}_{s_j}(x_j,\xi_j) \pi^{j}_{t_1,t_2}(ds_j) 
    \int_{\RR^+} \hat{\mu}^{d+1}_{s_{d+1}} \left( x_{d+1},\sum_{j=1}^d\alpha_j\xi_j \right) \pi^{d+1}_{t_1,t_2}(ds_{d+1}).
\end{split}
\end{equation}

The latter expression is useful for practical applications, such as calibration of the model, simulation of sample paths and pricing. 
 
\begin{proposition} 
    The process $\YY(t;k)$, and therefore $\YY(t)$ in \eqref{eq:MultiOU2}, has bounded variations if and only if $\alpha\in(0,\tfrac{1}{2})$. 
\end{proposition}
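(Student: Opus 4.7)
The plan is to observe that $\YY(t)$ has vanishing Gaussian part (Proposition \ref{prop:triplet} gives $\Sigma_{\YY}(t)=\boldsymbol{0}$), so it is a pure-jump It\^o semimartingale. For such a process the bounded-variation property on compact intervals is equivalent to the local integrability
\begin{equation*}
I(t,\xx):=\int_{\RR^d}(\|\yy\|\wedge 1)\,\nu_{\YY}(t,\xx,d\yy)<\infty,\qquad t\geq 0,\ \xx\in\RR^d,
\end{equation*}
since then the small-jump compensator drops out of the L\'evy--Khintchine representation and the jump part becomes an absolutely convergent sum. Thus the proposition reduces to showing that $I(t,\xx)<\infty$ for every $t,\xx$ iff $\alpha\in(0,1/2)$.

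Using the explicit decomposition of $\nu_{\YY}$ displayed just before the statement of this proposition, together with Fubini--Tonelli, I would split $I(t,\xx)=\sum_{j=1}^d J_j(t,x_j)+J_{d+1}(t,\xx)$, where
\begin{equation*}
J_j(t,x_j)=\int_{\RR_+}\Phi_j(s,x_j)\,\nu^{(j)}(t,ds),\qquad \Phi_j(s,x):=\int_{\RR}(|y|\wedge 1)\,u^{(j)}_{s}(x,x+y)\,dy,
\end{equation*}
and the $(d+1)$-th summand is analogous, with inner kernel $\int_{\RR}(\|\boldsymbol{a}\||z|\wedge 1)\,u^{(d+1)}_{s}(\xx,\xx+\boldsymbol{a}z)\,dz$. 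The quantity $\Phi_j(s,x)$ is precisely $\mathbb{E}[|U_j(s)-x|\wedge 1\mid U_j(0)=x]$, and likewise for the $(d+1)$-th term.

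The crucial step is the two-sided small-time estimate $\Phi_j(s,x)\asymp\sqrt{s}$ as $s\downarrow 0$, uniformly on compact sets in $x$. Since $U_j(s)-x$ is Gaussian with mean of order $s$ and variance $\tfrac{\sigma_j^2}{2k_j}(1-e^{-2k_j s})$ of order $s$, the Gaussian dispersion dominates the drift, and the explicit formula for the absolute moment of a Gaussian yields $\Phi_j(s,x)\sim \sigma_j\sqrt{s/(\pi k_j)}$, with the $\wedge 1$ truncation irrelevant for small $s$; for large $s$ the integrand is bounded by $1$. From \eqref{eq:Levj}, $\nu^{(j)}(t,ds)$ is absolutely continuous with density equivalent to $C(t)\alpha\, s^{-\alpha-1}$ as $s\downarrow 0$ and an exponentially tempered tail. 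Combining these facts gives
\begin{equation*}
J_j(t,x)<\infty\iff \int_0^1 s^{-\alpha-1/2}\,ds<\infty\iff \alpha<\tfrac12,
\end{equation*}
and identically for $J_{d+1}$. Summing over $j$ yields the claimed equivalence. The main technical obstacle will be the lower bound $\Phi_j(s,x)\gtrsim\sqrt{s}$, required for the ``only if'' direction so that divergence for $\alpha\geq 1/2$ cannot be cancelled between summands; this I would justify by exploiting the explicit Gaussian density and the fact that the drift-to-deviation ratio tends to $0$ as $s\downarrow 0$, ensuring a strictly positive limit for $\Phi_j(s,x)/\sqrt{s}$.
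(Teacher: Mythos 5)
Your proposal is correct and follows essentially the same route as the paper: reduce bounded variation to finiteness of $\int_{\RR^d}(1\wedge|\yy|)\,\nu_{\YY}(t,\xx,d\yy)$, apply Fubini, show the inner truncated first moment of the OU transition kernel behaves like $\sqrt{s}$ for small $s$, and integrate against the tempered-stable density $\sim s^{-\alpha-1}$ to obtain the threshold $\alpha<\tfrac12$. The paper delegates the $\sqrt{s}$ upper bound to Lemma 30.3 of Sato and the conclusion to Proposition 2.1 of the cited tempered-stable paper, whereas you work the estimate out directly and are explicitly careful about the matching lower bound needed for the ``only if'' direction — a detail the paper's one-sided bound leaves to the references.
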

\begin{proof}
   Consider the density  $u_{\ss}(\xx, \yy)$ of the $\RR^{d+1}$-valued OU process. 
We have 
\begin{equation}
\begin{split}
\int_{\RR^{d+1}}(1\wedge |\yy|) \nu(t,\xx,  d\yy)= \int_{\RR^{d+1}_+}\left(\int_{\RR^{d+1}}(1\wedge |\yy|)u_{\ss}(\xx, \yy)d\yy\right)\nu(t, ds),
\end{split}
\end{equation}  
and, by Lemma 30.3 in \cite{sa}, $\int_{\RR^{d+1}}(1\wedge |\yy|)u_{\ss}(\xx, \yy)d\yy<C|s|^{1/2}$. The thesis follows from Proposition 2.1 in \cite{semeraro2022multivariate}. 
\end{proof}

\section{Future works} \label{sec:FutureWorks}

In this work, we have investigated general additive time-changes of time-homogeneous, multidimensional, and multiparameter Markov processes exploiting the fact that they can be expressed as sums of single-parameter processes (see Proposition \ref{prop:sum}). In the applications section, we focused on the specific case of Gaussian Ornstein–Uhlenbeck  processes time-changed by a Sato process. Our intention is to relax the Gaussianity assumption and consider more general processes.
As a first step, we will focus on OU processes driven by stable processes, all sharing the same stability index. For this class, it remains true that their sum, as defined in equation \eqref{eq:sumInd} is still an OU process driven by stable noise (\cite{masuda2004multidimensional}).
Ultimately, we aim to further generalize the framework to encompass (markovian) infinitely divisible processes (\cite{basse2016infinitely}).
A stochastic process $(X_t)_{t \geq 0} $ is infinitely divisible if, for each  $t$, the random variable  $X_t$  has an infinitely divisible distribution; that is, for every  $n \in \mathbb{N}$, there exists a sequence of i.i.d.\@ random variables $X_t^{(1)}, \dots, X_t^{(n)}$ such that:
$$
X_t \overset{d}{=} X_t^{(1)} + \cdots + X_t^{(n)}.
$$
This property would allow us to construct processes keeping a convolution structure and to express the symbol in terms of Lévy-Khintchine formulation. 
These processes play for Lévy processes the role that Gaussian processes play for Brownian motion (\cite{talagrand1993regularity}).
Finally, of considerable interest is the study of Markov processes time-changed by the inverse of additive subordinators (\cite{orsingher2016time}). This operation gives rise to semi-Markov or, more generally, non-Markovian processes, which are typically used to model dynamics governed by anomalous diffusion, and have found valuable applications in option pricing as well (\cite{ascione2024time}). In such cases, it is also particularly interesting to derive the corresponding governing equations, which are often expressed in terms of fractional differential operators (\cite{meerschaert2019inverse}).
Of course, the possible applications are countless and are not limited to the field of mathematical finance.

\end{document}